\DeclareMathAlphabet{\eusm}{OT1}{eusm}{m}{n}
\newtheorem{theor}{Theorem}[section]
\newtheorem{prop}[theor]{Proposition}
\newtheorem{defi}[theor]{Definition}
\newtheorem{cor}[theor]{Corollary}
\newtheorem{lem}[theor]{Lemma}
\newtheorem{ex}[theor]{Example}
\newtheorem{rem}[theor]{Remark}
\def\Ann{\mbox{Ann\/}}
\newenvironment{proof}{\par\noindent{\it Proof.}}{$\Box$\par\bigskip}
\begin{document}

\title {On Two Classes of Modules Related to CS Trivial Extensions}
\author{}

\author{Farid Kourki \\
Centre R\'{e}gional des M\'{e}tiers de l'Education\\ et de la Formation (CRMEF)-Tanger,\\
Annexe de Larache,
B.P. 4063, Larache, Morocco\\
\small{kourkifarid@hotmail.com}\\
Rachid Tribak\\
Centre R\'{e}gional des M\'{e}tiers de l'Education\\ et de la Formation (CRMEF)-Tanger,\\
Avenue My Abdelaziz, Souani, B.P. 3117,\\ Tanger, Morocco\\
\small{tribak12@yahoo.com}}

\date{}

\maketitle

\begin{abstract}
\renewcommand{\thefootnote}{\fnsymbol{footnote}}
\setcounter{footnote}{-1}
\footnote{Mathematics Subject Classification 2020: 16D10, 16D70, 16D80 \\
Key words and phrases: CS-ring; strongly CS module (ring); trivial extension; weakly IN module (ring) }
\renewcommand{\thefootnote}{\arabic{footnote}}
All rings considered are commutative. In this article we introduce and study two notions of modules which are stronger than CS modules,
namely weakly IN modules and strongly CS modules.
Our main aim is to characterize when a trivial extension $A=R\propto M$ (of a ring $R$
by an $R$-module $M$) is a CS ring.
\end{abstract}

\maketitle

\section{Introduction}
Throughout this article, all rings considered are assumed to be commutative rings with an identity and $R$ denotes such a ring. All modules are unital.
We denote respectively by $Spec(R)$, $Max(R)$ and $Min(R)$ the set of all prime ideals of $R$, the set of all maximal ideals of $R$
and the set of all minimal prime ideals of $R$. The nilradical of $R$ and the Jacobson radical of $R$ are denoted by $Nil(R)$ and $J(R)$, respectively.
Let $M$ be an $R$-module and let $x \in M$. By $Ann_R(x)$ and $Ann_R(M)$ we denote the {\it annihilator} of $x$ and $M$, respectively;
i.e. $Ann_R(x)=\{r\in R \mid rx=0\}$ and $Ann_R(M)=\{r\in R \mid rM=0\}$. The notation $N \subseteq M$ means that $N$ is a subset of $M$;
$N \leq M$ means that $N$ is a submodule of $M$; and $N \subseteq ^{ess} M$ means that $N$ is an essential submodule of $M$.
By $\mathbb{Q}$ and $\mathbb{Z}$ we denote the ring of rational and integer numbers, respectively.

An $R$-module $M$ is called CS (or extending) if every submodule of $M$ is essential in a direct summand of $M$ and a ring $R$ is called CS
if the $R$-module $R$ is CS.
By \cite[Theorem 6]{Kourki:2003}, $R$ is CS if and only if for any ideals $I$ and $J$ of $R$ with $I\cap J=0$, $Ann_R(I)+Ann_R(J)=R$. In our attempt to characterize CS trivial extensions, we were lead to introduce two types of CS modules. The first one comes form the above characterization of CS rings.
For consider $A=R\propto M$, the trivial extension of $R$ by an $R$-module $M$, and let $\phi:M \rightarrow A$ be the natural monomorphism.
Assume that $A$ is a CS ring. Then given two submodules $N$ and $L$ of $M$ such that $N\cap L=0$, it is clear that $\phi(N)$ and $\phi(L)$ are two ideals of $A$ such that $\phi(N)\cap\phi(L)=0$. Thus $Ann_A\phi(N)+Ann_A\phi(L)=A$ which implies that $Ann_R(N)+Ann_R(L)=R$.
Recall that a ring $R$ is called an Ikeda-Nakayama ring (or IN ring) if for any two ideals $T$ and $T^{\prime}$ of $R$, we have
$Ann_R(T \cap T^{\prime})=Ann_R(T)+Ann_R(T^{\prime})$ (see \cite[p. 148]{NY}).
Call an $R$-module $M$ weakly IN if $Ann_R(N)+Ann_R(L)=R$ whenever $N$ and $L$ are submodules of $M$ such that $N\cap L=0$. The second type is due to the definition itself of CS rings. Suppose that $A$ is a CS ring and let $N$ be a submodule of $M$. Then $\phi(N)$ is an ideal of $A$ and hence $\phi(N)$ is essential in $fA$ for some idempotent $f$ of $A$. But $f$ is of the form $(e,0)$ for some idempotent $e$ of $R$, so $N$ is essential in $eM$. Call an $R$-module $M$ strongly CS if for every submodule $N$ of $M$, there exists an idempotent $e$ of $R$ such that $N$ is an essential submodule of $eM$. The above discussion justifies our choice of the title and motivates our study in this paper.

In Section 2, we study some properties of weakly IN modules and strongly CS modules. We investigate the connection between these two classes
of modules. It is shown that an $R$-module $M$ is strongly CS if and only if $M$ is weakly IN and idempotents lift modulo $Ann_R(M)$ (Theorem \ref{WIN}).
Then we characterize the class of rings $R$ over which every cyclic $R$-module is weakly IN (strongly CS)
(Theorems \ref{cyclics are weakly IN} and \ref{cyclics are strongly CS}).

In section 3, we focus on when a finite direct sum of weakly IN (strongly CS) modules is also weakly IN (strongly CS).
Let an $R$-module $M=M_1\oplus M_2\oplus \cdots \oplus M_n$ be a direct sum of submodules $M_1, M_2, \dots, M_n$.
We show that $M$ is weakly IN if and only if each $M_i$ is weakly IN and $Ann_R(M_i)+Ann_R(M_j)=R$ for all distinct $i$, $j$ in $\{1, \ldots, n\}$
(Theorem \ref{weakly IN direct sum}).
It is also shown that $M$ is a strongly CS $R$-module if and only if $R=R_1\times R_2\times \cdots \times R_n$ such that $M_i$ is a strongly CS $R_i$-module for all $i\in \{1,\dots ,n\}$ (Theorem \ref{finite strongly CS direct sum}). As an application, we prove that $R$ is a clean ring (i.e. every element of $R$ is a sum of a unit and an idempotent) if and only if every weakly IN $R$-module is strongly CS if and only if $R/\mathfrak{m}\oplus R/\mathfrak{m}'$ is a strongly CS $R$-module for every distinct maximal ideals $\mathfrak{m}$ and $\mathfrak{m}'$ of $R$ (Theorem \ref{clean rings}). Also, we characterize the class of rings $R$ for which $R/\mathfrak{p}\oplus R/\mathfrak{p}'$ is a strongly CS $R$-module for every distinct minimal prime ideals $\mathfrak{p}$ and $\mathfrak{p}'$ of $R$ as that of the purified rings (Theorem \ref{purified-characterization}).
Recall that a ring $R$ is said to be a purified (or coclean) ring if for every distinct minimal prime ideals $\mathfrak{p}$ and $\mathfrak{q}$ of $R$,
there exists an idempotent $e$ of $R$ such that $e\in \mathfrak{p}$ and $1-e\in \mathfrak{q}$.

In section 4, we characterize weakly IN and strongly CS modules over Dedekind domains. In particular, we show that if $R$ is a Dedekind domain and $M$ is an $R$-module, then $M$ is weakly IN if and only if $M$ is cyclic, or $M \cong E(R/\mathfrak{m})$ for some maximal ideal $\mathfrak{m}$ of $R$,
or $M$ is isomorphic to an $R$-submodule of the quotient field $K$ of $R$ (Theorem \ref{WIN over Dedekind domains}).

The last section is devoted to the study of CS trivial extensions. Let $A=R\propto M$ denotes the trivial extension of a ring $R$ by an $R$-module $M$.
In the main result of this section, we show that $A$ is a CS ring if and only if $M$ is weakly IN (or strongly CS) and there exists
an idempotent $e$ of $R$ such that $Ann_R(M)=eR$ and $eR$ is a CS ring if and only if $M\oplus Ann_R(M)$ is a weakly IN (or strongly CS) $R$-module
(Theorem \ref{CS TX}). When $M$ is a flat $R$-module, we prove that $A$ is a CS ring if and only if $R$ and $M$ are weakly IN (or strongly CS) $R$-modules
and $Ann_R(M)$ is a direct summand of $R$ (Corollary \ref{flat}).

\section{Weakly IN and Strongly CS Modules}

Recall that an $R$-module $M$ is called CS if every submodule of $M$ is essential in a direct summand of $M$.
A ring $R$ is called CS if $R$ is a CS $R$-module. Using \cite[Theorem 6]{Kourki:2003}, we obtain the following proposition.

\begin{prop}\label{CS rings} The following are equivalent for a ring $R$:

\noindent{\em (1)} $R$ is a CS ring;

\noindent{\em (2)} For any ideal $I$ of $R$, there exists $e=e^2 \in R$ such that $I\subseteq^{ess}eR$;

\noindent{\em (3)} For any ideals $I$ and $J$ of $R$ with $I\cap J=0$, $Ann_R(I)+Ann_R(J)=R$.
\end{prop}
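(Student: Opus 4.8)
The plan is to reduce the proposition to two ingredients: the cited result \cite[Theorem 6]{Kourki:2003}, which handles one equivalence outright, and the standard correspondence between direct summands of $R$ and its idempotents, which handles the other. Since \cite[Theorem 6]{Kourki:2003} asserts that $R$ is CS precisely when $Ann_R(I)+Ann_R(J)=R$ for all ideals $I,J$ with $I\cap J=0$, the equivalence (1) $\Leftrightarrow$ (3) is nothing but a restatement of that theorem, so no argument is needed there.

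The remaining task is to prove (1) $\Leftrightarrow$ (2). First I would record the elementary fact that the direct summands of the $R$-module $R$ are exactly the ideals $eR$ with $e=e^2\in R$: if $R=A\oplus B$ and $1=e+f$ with $e\in A$ and $f\in B$, then multiplying by $e$ and using the directness of the sum forces $e=e^2$, $A=eR$ and $B=(1-e)R$; conversely every idempotent $e$ yields the decomposition $R=eR\oplus(1-e)R$. With this identification in hand, condition (1) --- every ideal of $R$ is essential in a direct summand of $R$ --- translates word for word into condition (2): for each ideal $I$ there is an idempotent $e$ with $I\subseteq^{ess}eR$. Hence (1) $\Leftrightarrow$ (2) follows immediately.

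I expect no genuine obstacle here: the only step demanding any care is the verification that the summands of $R$ are precisely the principal ideals generated by idempotents, and this is routine. Everything else is a direct appeal to the definition of a CS ring together with \cite[Theorem 6]{Kourki:2003}.
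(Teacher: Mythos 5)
Your proposal is correct and matches the paper's approach exactly: the paper gives no written proof beyond the remark that the proposition follows from \cite[Theorem 6]{Kourki:2003}, which is precisely your equivalence (1) $\Leftrightarrow$ (3), while (1) $\Leftrightarrow$ (2) is the routine translation via the correspondence $e=e^2 \mapsto eR$ between idempotents and direct summands of $R$, which you verify correctly (using $ef \in A \cap B = 0$ to get $e=e^2$ and $A=eR$).
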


A ring $R$ is called an Ikeda-Nakayama ring (or an IN-ring) if $Ann_R(I\cap J)=Ann_R(I)+Ann_R(J)$ for all ideals $I$ and $J$ of $R$ (see \cite{CNY}).
Using the endomorphism ring, Wisbauer, Yousif and Zhou generalized this notion to a module theoretic version in 2002
\cite{Wisbrauer et Yousif et Zhou :1994}.
Here, we will consider another generalization. We will call an $R$-module $M$ an {\it s.IN-module} ({\it scalar IN-module}) if
$Ann_R(N\cap L)=Ann_R(N)+Ann_R(L)$ for all submodules $N$ an $L$ of $M$. Next, we introduce two notions.
The first one is weaker than that of s.IN-modules and the latter one is stronger than that of CS modules.

\begin{defi} \rm Let $M$ be an $R$-module.

\noindent{(1)} $M$ is called {\it weakly IN} if for any submodules $N$ and $L$ of $M$ with $N\cap L=0$, $Ann_R(N)+Ann_R(L)=R$.

\noindent{(2)} $M$ is called {\it strongly CS} if for any submodule $N$ of $M$, there exists $e=e^2 \in R$ such that $N\subseteq^{ess}eM$.

\noindent{(3)} A ring $R$ is called weakly IN (strongly CS) if $R$, as an $R$-module, has the corresponding property.
\end{defi}

\begin{rem}\label{submodules of WIN modules}\em (1) From Proposition \ref{CS rings}, it follows easily that a ring $R$ is CS
if and only if $R$ is weakly IN if and only if $R$ is strongly CS.

(2) Consider the ring $\mathbb{Z}_2[x_1, x_2, \ldots]$ where $x_i^3=0$ for all $i$, $x_ix_j=0$ for all $i \neq j$ and $x_i^2=x_j^2 \neq 0$
for all $i$ and $j$. It was shown in \cite[Example 6]{CNY} that $R$ is a CS ring and hence $R$ is weakly IN, but $R$ is not an IN ring.
It follows that the $R$-module $R$ is weakly IN, but it is not an s.IN-module.

(3) If $M$ is a uniform $R$-module, then clearly $M$ is strongly CS. If $R$ is indecomposable or $M$ is indecomposable,
then the converse also holds.
\end{rem}

\begin{prop} \label{closed-submodules} Any submodule of a strongly CS $($weakly IN$)$ module is strongly CS $($weakly IN$)$.
\end{prop}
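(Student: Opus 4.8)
The plan is to handle the two assertions separately, since the weakly IN case is essentially immediate while the strongly CS case requires one small observation about essentiality.

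First I would dispose of the weakly IN part. Let $M$ be a weakly IN module and let $M'$ be a submodule of $M$. If $N$ and $L$ are submodules of $M'$ with $N\cap L=0$, then $N$ and $L$ are in particular submodules of $M$ with $N\cap L=0$, so the weakly IN hypothesis on $M$ yields directly $Ann_R(N)+Ann_R(L)=R$. Hence $M'$ is weakly IN, and there is nothing more to do here.

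For the strongly CS part, let $M$ be strongly CS and let $M'$ be a submodule of $M$. Given a submodule $N$ of $M'$, I would view $N$ as a submodule of $M$ and apply the strongly CS property of $M$ to produce an idempotent $e=e^2\in R$ with $N\subseteq^{ess}eM$. The claim is that the \emph{same} idempotent $e$ works for $M'$, i.e. $N\subseteq^{ess}eM'$. To see this, first note that $N\subseteq eM$ forces $eN=N$ (for $n=em$ gives $en=e^2m=em=n$), so that $N=eN\subseteq eM'$; and clearly $eM'\subseteq eM$. Thus $N$ sits between itself and $eM$ as $N\subseteq eM'\subseteq eM$, and $eM'$ is a submodule of $M'$. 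Since essentiality passes to intermediate submodules — any nonzero submodule of $eM'$ is a nonzero submodule of $eM$ and therefore meets $N$ nontrivially — we conclude $N\subseteq^{ess}eM'$, which is exactly what the definition of strongly CS for $M'$ demands.

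The only point requiring a moment's care is recognizing that no new idempotent needs to be manufactured: the key structural fact is that $eM'$ is squeezed between $N$ and $eM$, so that essentiality descends automatically. I do not expect a genuine obstacle here, since both the identity $eN=N$ and the downward transitivity of essential submodules are routine; the value of the statement lies in recording that these two new classes, like the class of CS modules, are closed under passage to submodules.
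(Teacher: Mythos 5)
Your proposal is correct and follows essentially the same route as the paper: the paper also applies the strongly CS property of the ambient module to get $N\subseteq^{ess}eM$ and then reuses the \emph{same} idempotent, concluding $N\subseteq^{ess}eM\cap M'=eM'$, which is exactly your observation that $N\subseteq eM'\subseteq eM$ combined with the downward passage of essentiality. The weakly IN assertion is likewise dismissed as evident in the paper.
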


\begin{proof} Let $M$ be a strongly CS module and let $N$ be a submodule of $M$. Let $L$ be a submodule of $N$.
Then $L \subseteq^{ess}eM$ for some idempotent $e$ of $R$. Hence $L \subseteq^{ess}eM \cap N = eN$.
The other assertion is evident.
\end{proof}

\begin{prop}\label{free weakly IN} A free $R$-module $F$ is weakly IN if and only if $rank(F)=1$ and $R$ is a CS ring.
\end{prop}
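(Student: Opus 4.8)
The plan is to treat the two implications separately, leaning on the equivalence recorded in Remark \ref{submodules of WIN modules}(1), namely that a ring $R$ is CS if and only if $R$ is weakly IN. For the backward direction, if $rank(F)=1$ then $F\cong R$ as $R$-modules. Assuming $R$ is CS, Remark \ref{submodules of WIN modules}(1) gives that $R$ is weakly IN, and since the weakly IN property is visibly preserved under isomorphism, $F$ is weakly IN. This direction is immediate.

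For the forward direction, suppose $F$ is weakly IN (and $F\neq 0$, so $rank(F)\geq 1$). The first and main step is to rule out $rank(F)\geq 2$. Fixing a basis $\{e_i\}_{i\in I}$ with at least two elements $e_1,e_2$, I would consider the submodules $N=Re_1$ and $L=Re_2$. Since these lie in distinct free coordinates, $N\cap L=0$. The key observation is that a basis element has zero annihilator: because $F$ is free, $re_j=0$ forces $r=0$, so $Ann_R(N)=Ann_R(e_1)=0$ and $Ann_R(L)=Ann_R(e_2)=0$. Hence $Ann_R(N)+Ann_R(L)=0\neq R$ (as $R\neq 0$), contradicting the weakly IN hypothesis. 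Therefore $rank(F)=1$, whence $F\cong R$, so $R$ itself is weakly IN, and Remark \ref{submodules of WIN modules}(1) yields that $R$ is a CS ring.

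The argument is essentially routine; the only place where the structure is genuinely used is the computation $Ann_R(Re_j)=0$, which is exactly where freeness enters, together with the fact that the rank of a free module over a commutative ring is well defined (invariant basis number), so that the trichotomy on $rank(F)$ is meaningful. The one convention to flag is that $F$ is taken nonzero: the zero module is vacuously weakly IN yet has rank $0$, so excluding it is precisely what upgrades the conclusion from $rank(F)\leq 1$ to $rank(F)=1$.
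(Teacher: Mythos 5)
Your proof is correct and follows essentially the same route as the paper: both rule out $rank(F)\geq 2$ by applying the weakly IN condition to two trivially intersecting free rank-one submodules (the paper phrases this as $F$ containing a copy of $R\oplus R$, you as $Re_1$ and $Re_2$ for basis elements), deriving the contradiction $Ann_R(R)+Ann_R(R)=0\neq R$, and then invoke Remark \ref{submodules of WIN modules}(1) for the rank-one case. Your explicit flagging of the nonzero convention for $F$ is a minor point the paper leaves implicit, but it does not change the argument.
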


\begin{proof} Let $F$ be a weakly IN free $R$-module. Suppose that $rank(F)\geq 2$. Then $F$ contains a submodule isomorphic to $R\oplus R$.
Therefore $Ann_R(R)+Ann_R(R)=R$, a contradiction. So $rank(F)=1$ and hence $R$ is a CS ring (Remark \ref{submodules of WIN modules}(1)). The converse is clear.
\end{proof}

From the preceding proposition, one can directly infer that over a field $K$, a module $M$ is weakly IN if and only if
$M \cong K$.

Recall that a module $M$ is called {\it quasi-continuous} if $M$ is CS and, for any two direct summands $M_1$, $M_2$ with $M_1 \cap M_2 = 0$,
$M_1 \oplus M_2$ is also a direct summand (see \cite{MM}). From \cite[Corollary 4]{Wisbrauer et Yousif et Zhou :1994},
it follows that a faithful $R$-module $M$ is weakly IN if and only if $M$ is a quasi-continuous $R$-module and for any  $f^2=f\in End_R(M)$,
there exists $r\in R$ such that $f(x)=rx$ for any $x\in M$. This fact can be extended to the general case as shown in the following characterization
of weakly IN modules.

\begin{prop}\label{pseudo-Baer modules}
Let $R$ be a ring and let $M$ be a nonzero $R$-module. Then the following are equivalent:

\noindent{\em (1)} $M$ is a weakly IN $R$-module;

\noindent{\em (2)} $M$ is a weakly IN $R/Ann_R(M)$-module;

\noindent{\em (3)} $M$ is a quasi-continuous $R$-module and for any  $f^2=f\in End_R(M)$, there exists $r\in R$ such that $f(x)=rx$ for any $x\in M$;

\noindent{\em (4)} $M$ is a CS $R$-module and for any direct summand $N$ of $M$, there exists $r\in R$ such that $N=rM$ and $r-r^2\in Ann_R(M)$;

\noindent{\em (5)} For any submodule $N$ of $M$, there exists  $r\in R$ such that $N\subseteq^{ess}rM$  and $r-r^2\in Ann_R(M)$;

\noindent{\em (6)} $M$ is strongly CS as an $R/Ann_R(M)$-module.
\end{prop}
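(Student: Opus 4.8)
The plan is to reduce the whole statement to the faithful situation over the factor ring $\bar R=R/Ann_R(M)$, where the already quoted \cite[Corollary 4]{Wisbrauer et Yousif et Zhou :1994} is available, and then to run a short implication cycle through the internal conditions (4) and (5). I would begin with (1)$\Leftrightarrow$(2) via the annihilator correspondence. For every submodule $N\le M$ one has $Ann_R(M)\subseteq Ann_R(N)$, and the image of $Ann_R(N)$ in $\bar R$ is exactly $Ann_{\bar R}(N)$. Since the submodule lattices of $M$ over $R$ and over $\bar R$ coincide, and since $Ann_R(M)\subseteq Ann_R(N)$ makes $Ann_R(N)+Ann_R(L)=R$ equivalent to $Ann_{\bar R}(N)+Ann_{\bar R}(L)=\bar R$, the two weakly IN conditions match verbatim.

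Next I would establish (2)$\Leftrightarrow$(3). The key remarks are that $M$ is a faithful $\bar R$-module; that $End_R(M)=End_{\bar R}(M)$, since any additive map commuting with $R$ automatically commutes with $\bar R$ and conversely; that quasi-continuity is a lattice property and hence is the same over $R$ and over $\bar R$; and that the maps ``multiplication by $r\in R$'' and ``multiplication by $\bar r\in\bar R$'' have the same range as $r$ runs over $R$. With these identifications, \cite[Corollary 4]{Wisbrauer et Yousif et Zhou :1994} applied to the faithful $\bar R$-module $M$ says precisely that (2) holds iff $M$ is quasi-continuous and every idempotent of $End_R(M)$ is multiplication by some element of $R$, which is (3).

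I would then close the loop with (3)$\Rightarrow$(4)$\Rightarrow$(5)$\Rightarrow$(1) and deduce (2)$\Leftrightarrow$(6). For (3)$\Rightarrow$(4), quasi-continuity gives CS, and for a direct summand $N$ the projection onto $N$ is an idempotent endomorphism, so by (3) it equals multiplication by some $r$; then $N=rM$, and from $r^2x=rx$ for all $x$ we get $r-r^2\in Ann_R(M)$. For (4)$\Rightarrow$(5), given any submodule $N$ use CS to find a direct summand $D$ with $N\subseteq^{ess}D$ and apply (4) to $D$. The computational heart is (5)$\Rightarrow$(1): given $N\cap L=0$, choose $r$ with $L\subseteq^{ess}rM$ and $r-r^2\in Ann_R(M)$; the latter yields $M=rM\oplus(1-r)M$, whence $N\cap rM=0$ forces $rN\subseteq N\cap rM=0$, i.e. $r\in Ann_R(N)$, while $L\subseteq rM$ gives $(1-r)L=0$, i.e. $1-r\in Ann_R(L)$, so $1=r+(1-r)\in Ann_R(N)+Ann_R(L)$. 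Finally, (2)$\Leftrightarrow$(6) follows by applying the equivalence (1)$\Leftrightarrow$(5) over $\bar R$: since $M$ is faithful over $\bar R$, condition (5) over $\bar R$ reads ``for every submodule $N$ there is $\bar r\in\bar R$ with $N\subseteq^{ess}\bar rM$ and $\bar r-\bar r^2\in Ann_{\bar R}(M)=0$'', i.e. $\bar r$ is an idempotent of $\bar R$, which is exactly the statement that $M$ is strongly CS as an $\bar R$-module.

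The main obstacle I anticipate is the bookkeeping in the reduction to $\bar R$ — verifying that quasi-continuity, the endomorphism ring, and the set of scalar multiplications are genuinely unchanged under the passage to $R/Ann_R(M)$, so that \cite[Corollary 4]{Wisbrauer et Yousif et Zhou :1994} transfers cleanly — together with the decomposition step in (5)$\Rightarrow$(1), where both $r\in Ann_R(N)$ and $1-r\in Ann_R(L)$ must be extracted from the single essential inclusion $L\subseteq^{ess}rM$.
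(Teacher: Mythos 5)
Your proof is correct, and its skeleton largely matches the paper's: both arguments reduce to the faithful module $M$ over $\overline{R}=R/Ann_R(M)$ in order to invoke Corollary 4 of Wisbauer--Yousif--Zhou for the quasi-continuity characterization (the paper carries out the same bookkeeping you anticipate, noting that $M$ is a faithful $\overline{R}$-module with the same submodules), and the steps (3)$\Rightarrow$(4)$\Rightarrow$(5) are identical. The one genuine divergence is the computational step (5)$\Rightarrow$(1). The paper is symmetric: it picks $r$ and $s$ with $N\subseteq^{ess}rM$, $L\subseteq^{ess}sM$, $r-r^2,\,s-s^2\in Ann_R(M)$, deduces $rs\in Ann_R(M)$, $1-r\in Ann_R(N)$, $1-s\in Ann_R(L)$, and concludes via the identity $1=(1-r)+(1-s)r+sr$. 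You instead use a single element $r$ attached to $L$ alone: $r\in Ann_R(N)$ because $rN\subseteq N\cap rM$ and $N\cap rM=0$, while $1-r\in Ann_R(L)$ because $L\subseteq rM$ and $(1-r)r\in Ann_R(M)$, so $1=r+(1-r)$ finishes at once. This is shorter and avoids the two-element identity; the only presentational wrinkle is your ``whence $N\cap rM=0$'': that vanishing follows from $L\subseteq^{ess}rM$ together with $(N\cap rM)\cap L\subseteq N\cap L=0$, not from the splitting $M=rM\oplus(1-r)M$, which is in fact dispensable in your argument. Two further minor differences: the paper proves only (2)$\Rightarrow$(3) directly and recovers the converse through the cycle, whereas you take the full ``if and only if'' from the cited corollary, which is legitimate since that corollary is stated as an equivalence for faithful modules; and the paper dismisses (5)$\Leftrightarrow$(6) as immediate (an $r$ with $r-r^2\in Ann_R(M)$ is exactly an idempotent of $\overline{R}$, and $rM=\overline{r}M$ with the same essential inclusions), while you derive (2)$\Leftrightarrow$(6) by re-running the established equivalence (1)$\Leftrightarrow$(5) over $\overline{R}$ --- slightly more roundabout but equally valid.
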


\begin{proof} (1) $\Leftrightarrow$ (2) This is clear.

(2) $\Rightarrow$ (3) Let $\overline{R}=R/Ann_RM$. It is easy to see that $M$ has a natural structure of ($\overline{R}$,$R$)-bimodule and $M$ is a faithful left $\overline{R}$-module. Moreover, the $R$-submodules and the $\overline{R}$-submodules of $M$ are the same.
Using \cite[Corollary 4]{Wisbrauer et Yousif et Zhou :1994}, we see that $M$ is a quasi-continuous $R$-module and for any $f^2=f\in End_R(M)$,
there exists $\overline{r}=r+Ann_R(M)\in \overline{R}$ such that $f(x)=\overline{r}x=rx$ for all $x\in M$.

(3) $\Rightarrow$ (4) It is clear that $M$ is a CS $R$-module. Let $N$ and $K$ be two submodules of $M$
such that $M=N\oplus K$. Let $f$ be the projection on $N$ along $K$. Then $f^2=f$ and so there exists $r\in R$
such that $f(x)=rx$ for any $x\in M$. Hence, $N=f(M)=rM$. But $f^2=f$, so $r-r^2\in Ann_R(M)$.

(4) $\Rightarrow$ (5) Let $N$ be a submodule of $M$. Since $M$ is CS, there exists a direct summand $K$ of $M$ such that $N\subseteq^{ess}K$.
Moreover, by assumption, $K=rM$ for some $r\in R$ with $r-r^2\in Ann_R(M)$.

(5) $\Rightarrow$ (1) Let $N$ and $L$ be two submodules of  $M$ with $N\cap L=0$. Then there exist $r$ and $s$ in $R$ such that $N\subseteq^{ess}rM$, $L\subseteq^{ess}sM$, $r-r^2\in Ann_R(M)$ and $s-s^2\in Ann_R(M)$. Thus $N \cap L \subseteq^{ess} rM \cap sM$.
Since $N\cap L=0$, we have $rM\cap sM=0$. This implies that $rsM\subseteq rM\cap sM=0$ and hence $rs\in Ann_R(M)$.
Moreover, since $r-r^2\in Ann_R(M)$ and $s-s^2\in Ann_R(M)$, we have $(1-r)\in Ann_R(rM)\subseteq Ann_R(N)$ and  $(1-s)\in Ann_R(sM) \subseteq Ann_R(L)$.
It follows that $$1=(1-r)+(1-s)r+sr\in Ann_R(N)+Ann_R(L).$$ Therefore, $R=Ann_R(N)+Ann_R(L)$. Thus, $M$ is weakly IN.

(5) $\Leftrightarrow$ (6) This is immediate.
\end{proof}

In the next example, we present a CS module which is not weakly IN, and some CS modules which are not strongly CS.
More examples are provided in the next two sections.

\begin{ex} \rm (1) Let $R$ be a self injective ring. Then clearly the $R$-module $M=R \oplus R$ is CS. However, $M$ is not weakly IN
by Proposition \ref{free weakly IN}.

(2) Let $R$ be an indecomposable ring (e.g., $R$ is a local ring or a domain).

(a) Consider the $R$-module $M=S_1 \oplus S_2$ where $S_1$ and $S_2$ are simple modules. It is clear that $M$ is a CS module.
On the other hand, $M$ is not strongly CS (see Remark \ref{submodules of WIN modules}(3)).

(b) Let $M$ be an injective $R$-module which is not indecomposable (e.g., we can take the $\mathbb{Z}$-module $M=\mathbb{Q} \oplus \mathbb{Q}$
or $M=\mathbb{Z}(p_1^{\infty}) \oplus \mathbb{Z}(p_2^{\infty})$ for some prime numbers $p_1$ and $p_2$). Then $M$ is CS, but $M$ is not
strongly CS by Remark \ref{submodules of WIN modules}(3).
\end{ex}

Let $R$ be a ring and let $I$ be a proper ideal of $R$. We say that idempotents lift modulo $I$ if whenever $r$ is an element of $R$ such that $r-r^2\in I$, then there exists $e=e^2\in R$ such that $r-e\in I$.

\begin{theor} \label{WIN} Let $R$ be a ring and let $M$ be a nonzero $R$-module. Then the following are equivalent:

\noindent{\em (1)} $M$ is a strongly CS $R$-module;

\noindent{\em (2)} For any submodules $N$ and $L$ of $M$ with $N\cap L=0$, there exists an idempotent $e\in R$ such that $e\in Ann_R(N)$ and $1-e\in Ann_R(L)$;

\noindent{\em (3)} $M$ is a weakly IN $R$-module and idempotents lift modulo $Ann_R(M)$;

\noindent{\em (4)} $M$ is a CS module and for every direct summand $K$ of $M$, there exists an idempotent $e$ of $R$ such that $K=eM$.
\end{theor}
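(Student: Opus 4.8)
The plan is to establish the cycle $(1)\Rightarrow(2)\Rightarrow(3)\Rightarrow(1)$ and then prove the separate equivalence $(1)\Leftrightarrow(4)$, relying throughout on the characterization of weakly IN modules in Proposition \ref{pseudo-Baer modules}, especially condition (5): $M$ is weakly IN exactly when every submodule $N$ lies essentially inside $rM$ for some $r$ with $r-r^2\in Ann_R(M)$. The guiding idea is that being strongly CS is precisely being weakly IN with the ``near-idempotent'' $r$ replaced by a genuine idempotent, so the whole theorem turns on upgrading $r$ to an idempotent, which is what the lifting hypothesis supplies.

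For $(1)\Rightarrow(2)$, given $N\cap L=0$ I would use strong CS to choose an idempotent $e$ with $L\subseteq^{ess}eM$. Since $L\subseteq eM$ forces $el=l$ for $l\in L$, one gets $1-e\in Ann_R(L)$ at once. For the other containment I would show $N\cap eM=0$: any nonzero $x\in N\cap eM$ would, by essentiality of $L$ in $eM$, admit $rx\in L\setminus\{0\}$, but $rx\in N$ too, contradicting $N\cap L=0$. Then $eN\subseteq N\cap eM=0$ gives $e\in Ann_R(N)$.

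The implication $(2)\Rightarrow(3)$ is where the real work lies, and I expect it to be the main obstacle. That $M$ is weakly IN is immediate from $1=e+(1-e)\in Ann_R(N)+Ann_R(L)$. To see that idempotents lift modulo $Ann_R(M)$, take $r$ with $r-r^2\in Ann_R(M)$ and set $N=rM$, $L=(1-r)M$. The key preliminary computation is $N\cap L=0$: for $x$ in the intersection one verifies $rx=0$ and $(1-r)x=0$ using $r(1-r)\in Ann_R(M)$, whence $x=rx+(1-r)x=0$. Applying (2) produces an idempotent $e$ with $erM=0$ and $(1-e)(1-r)M=0$; expanding $(1-e)(1-r)=1-e-r+er$ and discarding the term $er$, which annihilates $M$, yields $1-e-r\in Ann_R(M)$, i.e. $r\equiv 1-e \pmod{Ann_R(M)}$. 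Since $1-e$ is idempotent, it is the desired lift. The delicate points here are the correct choice of the pair $(N,L)$ and recognizing that the lifting idempotent is $1-e$ rather than $e$.

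Finally, $(3)\Rightarrow(1)$ follows by feeding a submodule $N$ into Proposition \ref{pseudo-Baer modules}(5) to obtain $r$ with $N\subseteq^{ess}rM$ and $r-r^2\in Ann_R(M)$, lifting $r$ to an idempotent $e$ with $r-e\in Ann_R(M)$ so that $rM=eM$, and concluding $N\subseteq^{ess}eM$. For $(1)\Leftrightarrow(4)$, strong CS plainly forces $M$ to be CS; given a direct summand $K$, I would pick an idempotent $e$ with $K\subseteq^{ess}eM$ and use the modular law (since $K\subseteq eM$) to realize $K$ as a direct summand of $eM$, whence essentiality gives $K=eM$. Conversely, CS provides a direct summand $K$ with $N\subseteq^{ess}K$, and the hypothesis $K=eM$ immediately yields strong CS.
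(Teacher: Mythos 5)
Your proposal is correct and follows essentially the same route as the paper: the cycle $(1)\Rightarrow(2)\Rightarrow(3)\Rightarrow(1)$, with weak IN immediate from $1=e+(1-e)$, the lifting of an almost-idempotent $r$ obtained by applying (2) to the pair $rM$, $(1-r)M$, and the return to (1) via Proposition \ref{pseudo-Baer modules}(5). The only deviations are cosmetic: in $(1)\Rightarrow(2)$ you invoke strong CS for $L$ alone and argue $N\cap eM=0$ by essentiality, where the paper uses idempotents for both submodules, and you supply the modular-law details for $(1)\Leftrightarrow(4)$, which the paper simply declares clear.
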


\begin{proof} (1) $\Rightarrow$ (2)  Let $N$ and $L$ be two submodules of  $M$ with $N\cap L=0$. Then there exist $e=e^2$ and $f=f^2$ in $R$ such that $N\subseteq^{ess}fM$ and $L\subseteq^{ess}eM$. Using the fact that $N\cap L=0$, we get $fM\cap eM=0$. Hence, $efM \subseteq fM\cap eM=0$. So $e\in Ann_R(fM)\subseteq Ann_R(N)$ and $1-e\in Ann_R(eM)\subseteq Ann_R(L)$.

(2) $\Rightarrow$ (3)  Let $N$ and $L$ be two submodules of  $M$ with $N\cap L=0$. By hypothesis, there exists an idempotent $e\in R$
such that $e\in Ann_R(N)$ and $1-e\in Ann_R(L)$. But $e+1-e=1$, so  $Ann_R(N)+Ann_R(L)=R$. It follows that $M$ is weakly IN.
Now let $r\in R$ such that $r-r^2\in Ann_R(M)$ and let $rx=(1-r)y\in rM\cap (1-r)M$ where $x,y\in M$. Then $r^2x=(r-r^2)y=0$ since $r-r^2\in Ann_R(M)$. Using once again the fact that $r-r^2\in Ann_R(M)$, we get $rx=0$.  Therefore $rM\cap (1-r)M=0$. Hence, by (2), there exists $f=f^2\in R$
such that $1-f\in Ann_R(rM)$ and $f\in Ann_R((1-r)M)$. This means that $r-fr\in Ann_R(M)$ and $f-fr\in Ann_R(M)$.
Consequently, $$f-r=(f-fr)-(r-fr)\in Ann_R(M).$$ This shows that idempotents lift modulo $Ann_R(M)$.

(3) $\Rightarrow$ (1) Let $N$ be a submodule of $M$. Since $M$ is a weakly IN $R$-module, there exists $r\in R$ such that $N\subseteq^{ess}rM$
and $r-r^2\in Ann_R(M)$ (see Proposition \ref{pseudo-Baer modules}). But idempotents lift modulo $Ann_R(M)$, so there exits an idempotent $e$ of $R$
such that $r-e\in Ann_R(M)$. Therefore $rM=eM$ and consequently $N\subseteq^{ess}eM$.

(1) $\Leftrightarrow$ (4) This is clear.
\end{proof}

The preceding theorem shows that the class of weakly IN modules contains that of strongly CS modules. Next, we present an example
illustrating that this inclusion is proper, in general.

\begin{ex} \label{weakly IN-not-strongly CS} \rm Let $p$ and $q$ be two different prime numbers and consider the ring
$R=\{m/n \in \mathbb{Q} \mid p \nmid n$ and $q \nmid n \ (m/n$ in lowest terms$)\}$.
It is well known that $pR$ and $qR$ are the only maximal ideals in $R$. Moreover, idempotents do not lift modulo $J(R)=pR \cap qR$.
Let $M=R/pR \oplus R/qR$. Clearly, $Ann_R(M)=J(R)$. From Corollary \ref{prime ideals}, we infer that $M$ is a weakly IN $R$-module which not strongly CS.
\end{ex}

Let $R$ be a ring. Recall that the socle of $R$, denoted by $Soc(R)$, is the sum of all its minimal ideals.
In the following corollary, we provide sufficient conditions for a weakly IN module to be strongly CS.

\begin{cor}\label{Ann(R) direct summand} Let $M$ be a nonzero $R$-module such that $Ann_R(M)$ satisfies any one of the following conditions:

 \noindent{\em (1)}  $Ann_R(M)$ is a nil ideal of $R$ $($i.e., $Ann_R(M)\subseteq Nil(R)$$)$;

\noindent{\em (2)}  $Ann_R(M)$ is a direct summand of $R$ $($for instance, $M$ is faithful$)$;

\noindent{\em (3)} $Ann_R(M)=Soc(R)$.

\noindent Then $M$ is a weakly IN $R$-module if and only if $M$ is strongly CS.
\end{cor}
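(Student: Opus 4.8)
The plan is to reduce everything to Theorem \ref{WIN}, which asserts that $M$ is strongly CS if and only if $M$ is weakly IN and idempotents lift modulo $Ann_R(M)$. Since every strongly CS module is weakly IN by that same theorem, only the implication ``weakly IN $\Rightarrow$ strongly CS'' requires proof in each case, and by the cited equivalence this amounts to showing that idempotents lift modulo $I:=Ann_R(M)$ under each of the three hypotheses. Thus the whole corollary collapses into three separate idempotent-lifting statements.

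For (1) I would simply invoke the classical fact that idempotents lift modulo any nil ideal: if $r-r^2\in I\subseteq Nil(R)$, then $r-r^2$ is nilpotent and the standard argument produces an idempotent $e$ of $R$ with $e-r\in I$. For (2) I would write $I=gR$ with $g=g^2$ (a direct summand of $R$ in a commutative ring is generated by an idempotent, and the faithful case is just $I=0$). Then $R/I\cong (1-g)R$, a ring with identity $1-g$ whose idempotents coincide with idempotents of $R$; hence any $r$ with $\bar r=\bar r^2$ in $R/I$ is represented by an idempotent of $(1-g)R$ differing from $r$ by an element of $I$, so idempotents lift modulo $gR$.

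Case (3) is where the real work lies. Given $r$ with $a:=r-r^2\in Soc(R)$, I would use that $Soc(R)$ is a semisimple module, so the cyclic submodule $aR$ is a finite direct sum of minimal ideals. I would split these into the nilpotent ones (each with square zero), whose sum $N$ satisfies $N^2=0$, and the non-nilpotent ones, each of the form $e_iR$ for an idempotent $e_i$ by Brauer's lemma. Since distinct minimal ideals meet in $0$, the $e_i$ are orthogonal, their sum $f$ is idempotent with $fR$ equal to the sum of these ideals, and one checks $fN=0$ and $fR\cap N=0$. Consequently $a\in N+fR$ with $N$ nil and $fR$ idempotent-generated. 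I would then pass to $\bar R=R/fR\cong(1-f)R$, where $\bar a=\bar r-\bar r^2$ lies in the nil ideal $\bar N\cong N$; by the lifting used in case (1) there is an idempotent $\bar e\in(1-f)R\subseteq R$ with $\bar e-\bar r\in\bar N$, which is an honest idempotent $e$ of $R$ satisfying $e-r\in N+fR\subseteq Soc(R)$. This establishes the lifting and finishes (3).

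The main obstacle is precisely case (3): unlike (1) and (2), the socle is in general neither nil nor a direct summand, so one must first dissect it through the structure theory of minimal ideals (Brauer's lemma together with the orthogonality coming from the fact that distinct minimal ideals intersect trivially) in order to reduce to the nilpotent situation already handled in (1). The only delicate points are verifying the orthogonality of the $e_i$ and the relations $fN=0$ and $fR\cap N=0$, all of which follow from that same intersection property.
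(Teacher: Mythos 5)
Your proposal is correct, and its skeleton is exactly the paper's: both proofs note that the strongly CS direction is immediate from Theorem \ref{WIN}, and that the converse reduces in all three cases to showing that idempotents lift modulo $Ann_R(M)$. For case (2) your argument (pass to $R/gR\cong(1-g)R$ and observe $(1-g)r$ is idempotent with $r-(1-g)r=gr\in gR$) is the paper's computation verbatim, merely rephrased through the ring isomorphism. The difference is in cases (1) and (3): the paper simply cites \cite[Proposition 27.1]{AF} for lifting modulo a nil ideal and \cite[Lemma 1.2]{YoZh} for lifting modulo $Soc(R)$, whereas you supply proofs. Your treatment of (3) is the genuinely new content, and it is sound: $aR$ with $a=r-r^2\in Soc(R)$ is a finite direct sum of minimal ideals; by Brauer's lemma each summand is square-zero or generated by an idempotent; distinct minimal ideals $S\neq S'$ satisfy $SS'\subseteq S\cap S'=0$, which gives both $N^2=0$ for the sum $N$ of the square-zero summands and the orthogonality of the $e_i$, hence $f=\sum e_i$ is idempotent with $fR\cap N=0$ and $fN=0$ inside the direct sum $aR$; reducing modulo $fR$ then places $\bar r-\bar r^2$ in the square-zero ideal $\bar N$, case (1) lifts it there, and the resulting idempotent $e\in(1-f)R$ satisfies $e-r\in N+fR\subseteq Soc(R)$ as required (note $fR\subseteq Soc(R)$ since each $e_iR$ is minimal). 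In effect you have reproved the commutative case of the Yousif--Zhou lemma; what this buys is a self-contained argument revealing \emph{why} the socle permits lifting (it decomposes locally into a nil part and an idempotent-generated part), at the cost of length, while the paper's citations keep the corollary short and cover the general (noncommutative) statements of the quoted lemmas.
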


\begin{proof} The sufficiency follows from Theorem \ref{WIN}. Conversely, suppose that $M$ is a weakly IN $R$-module.
Applying again Theorem \ref{WIN}, we only need to show that idempotents lift modulo $Ann_R(M)$.

(1) This follows from \cite[Proposition 27.1]{AF}.

(2) We will show that idempotent lift modulo every direct summand of $R$. Let $e=e^2\in R$ and let $r\in R$ such that $r-r^2\in eR$.
Then $(1-e)(r-r^2)=0$ and hence $((1-e)r)^2=(1-e)r$. Thus $(1-e)r$ is an idempotent of $R$. Moreover, we have $r-((1-e)r)=er\in eR$.

(3) This follows from \cite[Lemma 1.2]{YoZh}.
\end{proof}

Let $n\geq 2$. By Proposition \ref{free weakly IN}, there exists no ring $R$ for which every $n$-generated $R$-module is weakly IN.
This fact should be contrasted with \cite[Corollary 13.8]{DHSW}. On the other hand, take a valuation ring $R$.
It is easily seen that every cyclic $R$-module is uniform. Therefore every cyclic $R$-module is strongly CS and weakly IN
(see Remark \ref{submodules of WIN modules}(3) and Theorem \ref{WIN}). Next, we will characterize the class of rings $R$ for which every cyclic $R$-module is weakly IN (strongly CS).

A ring $R$ is called a {\it CF-ring} if every $R$--module $M$ which is a direct sum of finitely many cyclic $R$--modules has
a {\it canonical form decomposition}, i.e. $M \cong R/I_1\oplus \cdots \oplus R/I_n$ where the ideals $I_j$ $(1 \leq j \leq n)$ of $R$ satisfy
$I_1\subseteq I_2\subseteq \cdots \subseteq I_n \varsubsetneq R$ (see \cite{SW}).
For the definitions of the other types of rings used in the following two results, we refer the reader to \cite{Brandal}.

\begin{theor}\label{cyclics are weakly IN} The following are equivalent for a ring $R$:

\noindent{\em (1)} Any cyclic $R$-module is weakly IN;

\noindent{\em (2)} $R/I$ is a CS ring for every proper ideal $I$ of $R$;

\noindent{\em (3)} $R$ is a CF-ring;

\noindent{\em (4)} $R$ is a finite direct product of valuation rings, h-local Pr\"{u}fer domains and torch rings.
\end{theor}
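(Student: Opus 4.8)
The plan is to prove $(1)\Leftrightarrow(2)$ directly, to read off $(3)\Leftrightarrow(4)$ from the structural classification of CF-rings in \cite{SW} (in the terminology of \cite{Brandal}), and to close the loop with the two implications $(4)\Rightarrow(2)$ and $(2)\Rightarrow(3)$. The equivalence $(1)\Leftrightarrow(2)$ is the quick part: a cyclic $R$-module is exactly an $R/I$, and $Ann_R(R/I)=I$. By $(1)\Leftrightarrow(2)$ of Proposition \ref{pseudo-Baer modules}, $R/I$ is weakly IN as an $R$-module if and only if it is weakly IN as an $R/I$-module, i.e.\ if and only if the ring $R/I$ is weakly IN, which by Remark \ref{submodules of WIN modules}(1) means $R/I$ is CS. Hence ``every cyclic $R$-module is weakly IN'' and ``$R/I$ is CS for every proper $I$'' say the same thing (the case $I=R$ being trivial).

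I would treat $(3)\Leftrightarrow(4)$ as the known classification of CF-rings, so nothing new is needed there. For $(4)\Rightarrow(2)$, write $R=R_1\times\cdots\times R_n$ with each $R_i$ a valuation ring, an h-local Pr\"ufer domain, or a torch ring. Any proper ideal $I$ splits as $I=I_1\times\cdots\times I_n$, giving $R/I\cong\prod_i R_i/I_i$; since a finite direct product of CS rings is CS, it suffices that every proper quotient of each $R_i$ be CS. For a valuation ring every quotient is a chain ring, hence uniform, hence CS. For an h-local Pr\"ufer domain the h-local condition makes $R_i/I_i$ a finite product of quotients of valuation domains, each uniform; and $R_i$ itself, being a domain, is uniform. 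The torch case reduces in the same way to chain-ring pieces along its minimal prime.

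The real content is $(2)\Rightarrow(3)$. My first step is to deduce from $(2)$ that $R$ is arithmetical, that is, $R_{\mathfrak{m}}$ is a valuation (chain) ring for every maximal ideal $\mathfrak{m}$. Suppose instead that $A_{\mathfrak{m}}$ and $B_{\mathfrak{m}}$ are incomparable for some proper ideals $A,B$ and some $\mathfrak{m}$. Then $\overline{R}=R/(A\cap B)$ is CS by $(2)$, the images $\overline{A},\overline{B}$ satisfy $\overline{A}\cap\overline{B}=0$, and Proposition \ref{CS rings}(3) gives $Ann_{\overline{R}}(\overline{A})+Ann_{\overline{R}}(\overline{B})=\overline{R}$. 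Choosing $r,s\in R$ whose images satisfy $\overline{r}+\overline{s}=\overline{1}$, $\overline{r}\,\overline{A}=0$ and $\overline{s}\,\overline{B}=0$ and localizing at $\mathfrak{m}$, one of $r,s$ becomes a unit of $R_{\mathfrak{m}}$ (since $A\cap B\subseteq\mathfrak{m}$); this forces $A_{\mathfrak{m}}\subseteq B_{\mathfrak{m}}$ or $B_{\mathfrak{m}}\subseteq A_{\mathfrak{m}}$, a contradiction. Thus $R$ is arithmetical.

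Arithmeticity alone is not enough: the local chain pieces yield a canonical form for a finite sum of cyclics only when they glue under suitable finiteness (h-local) conditions, and extracting these from $(2)$ is the main obstacle. Concretely, I expect the hard step to be showing that the CS hypothesis on all quotients $R/I$ forbids an ideal lying in infinitely many maximal ideals and pins down the minimal-prime structure, so that $R$ must be a finite product of valuation rings, h-local Pr\"ufer domains and torch rings --- i.e.\ $(4)$, hence $(3)$ by the classification. This is where the structure theory of CF-rings does the decisive work, with the arithmetical reduction above serving to make that theory applicable.
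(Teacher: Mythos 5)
Your reduction of (1) to (2) is correct and coincides with the paper's: a cyclic module is $R/I$ with $Ann_R(R/I)=I$, and Proposition \ref{pseudo-Baer modules} together with Remark \ref{submodules of WIN modules}(1) identifies ``$R/I$ weakly IN over $R$'' with ``$R/I$ is a CS ring'' (the paper routes through (1)$\Leftrightarrow$(6) of that proposition, you through (1)$\Leftrightarrow$(2); the difference is immaterial). Your localization argument that (2) forces $R$ to be arithmetical is also sound, including the point that $A\cap B\subseteq\mathfrak{m}$, since otherwise $A_{\mathfrak{m}}=B_{\mathfrak{m}}=R_{\mathfrak{m}}$ would be comparable.

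But there is a genuine gap at (2)$\Rightarrow$(3), and you flag it yourself without closing it. The paper handles the whole block (2)$\Leftrightarrow$(3)$\Leftrightarrow$(4) by a single citation of \cite[Theorem 9]{Kourki:2003}, which asserts precisely this equivalence; you instead set out to reprove it, and at the decisive point you say only that you ``expect'' the CS hypothesis on all quotients to pin down the h-local and minimal-prime structure, ``where the structure theory of CF-rings does the decisive work.'' That appeal is circular: the Shores--Wiegand classification \cite[Theorems 3.10 and 3.12]{SW} describes rings already known to be CF, so it delivers (3)$\Leftrightarrow$(4) but cannot certify that a ring satisfying (2) is a CF-ring. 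Arithmeticity alone does not suffice: a Pr\"{u}fer domain that is not h-local (e.g.\ the ring of all algebraic integers) is arithmetical yet not CF, so your reduction stops strictly short of (3), and the missing implication is real mathematical content --- it is the substance of \cite[Theorem 9]{Kourki:2003} --- which your argument does not supply. A smaller soft spot of the same kind: in (4)$\Rightarrow$(2) the torch case is only gestured at, and it includes the non-obvious subcase $I=0$, namely that a torch ring is itself CS.
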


\begin{proof} Let $I$ be a proper ideal of $R$ and consider the $R$-module $M=R/I$. By Proposition \ref{pseudo-Baer modules},
$M$ is a weakly IN $R$-module if and only if $M$ is a strongly CS $R/Ann_R(M)$-module. This is equivalent to the condition that $R/I$ is a CS ring
(Remark \ref{submodules of WIN modules}(1)). Now use \cite[Theorem 9]{Kourki:2003}.
\end{proof}

Recall that a ring $R$ is called {\it clean} if every element of $R$ is a sum of a unit and an idempotent.

\begin{theor}\label{cyclics are strongly CS} The following are equivalent for a ring $R$:

\noindent{\em (1)} Any cyclic $R$-module is strongly CS;

\noindent{\em (2)} $R$ is a clean CF-ring;

\noindent{\em (3)} $R$ is a finite direct product of valuation rings.
\end{theor}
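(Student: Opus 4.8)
The plan is to reduce condition (1) to a statement about lifting idempotents via Theorem \ref{WIN}, and then to match the resulting condition against the structure theorem for CF-rings already obtained in Theorem \ref{cyclics are weakly IN}.

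First I would record the cyclic reduction. For a proper ideal $I$ of $R$ the module $R/I$ satisfies $Ann_R(R/I)=I$, so by the equivalence of (1) and (3) in Theorem \ref{WIN}, the module $R/I$ is strongly CS if and only if it is weakly IN and idempotents lift modulo $I$. Letting $I$ range over all proper ideals and invoking Theorem \ref{cyclics are weakly IN}, condition (1) becomes equivalent to the statement that $R$ is a CF-ring and idempotents lift modulo every proper ideal of $R$.

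Next I would establish (1) $\Leftrightarrow$ (2). Since $R$ is commutative, every idempotent is central, so by the classical theory of exchange (equivalently, clean) rings, $R$ is clean precisely when idempotents lift modulo every ideal of $R$ (lifting modulo $R$ itself being vacuous). Inserting this into the previous paragraph turns condition (1) into ``$R$ is a clean CF-ring,'' which is exactly (2). Alternatively one can bypass this equivalence altogether and prove (1) $\Leftrightarrow$ (3) $\Leftrightarrow$ (2) through the decomposition used below, since for a finite product of local rings the lifting of idempotents reduces to the trivial local case.

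Finally I would prove (2) $\Leftrightarrow$ (3) using Theorem \ref{cyclics are weakly IN}(4), which writes any CF-ring as a finite direct product $R=R_1\times\cdots\times R_n$ whose factors are valuation rings, h-local Pr\"ufer domains, or torch rings. A finite product is clean if and only if each factor is clean, so the task is to decide which of the three types are clean. A valuation ring is local, hence clean. An h-local Pr\"ufer domain is a domain, so its only idempotents are $0$ and $1$; thus it is clean exactly when it is local, i.e. a valuation domain (a local Pr\"ufer domain is a valuation domain). A torch ring is indecomposable yet has more than one maximal ideal, hence is not local; and an indecomposable clean ring must be local, since writing $a=u+e$ with $e\in\{0,1\}$ forces $a$ or $a-1$ to be a unit for every $a$. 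Therefore a torch ring is never clean, a clean CF-ring has all of its factors equal to valuation rings, and we obtain (3); conversely a finite product of valuation rings is a CF-ring and is clean, giving (2).

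The main obstacle is this last step: pinning down, among the three constituents of a CF-ring supplied by Theorem \ref{cyclics are weakly IN}(4), exactly those that are clean. This rests on two facts drawn from the structure theory in \cite{Brandal}, namely that a torch ring is an indecomposable non-local ring and that a local h-local Pr\"ufer domain is a valuation domain, combined with the elementary observation that an indecomposable clean ring is local. Checking that cleanness is inherited by and from finite direct factors, and that idempotent lifting in a finite product of local rings is automatic, are the routine verifications that complete the argument.
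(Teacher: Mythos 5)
Your proposal is correct and follows essentially the same route as the paper: it reduces (1) to ``CF-ring plus idempotents lift modulo every ideal'' via Theorem \ref{WIN} and Theorem \ref{cyclics are weakly IN}, identifies the lifting condition with cleanness, and then passes through the CF-ring decomposition, eliminating torch rings (more than one maximal ideal, hence not local) and reducing local Pr\"ufer domains to valuation rings. The only difference is cosmetic: where the paper cites \cite{AC} for the facts that cleanness passes to and from finite direct factors and that an indecomposable clean ring is local, you supply short direct arguments, which is perfectly fine.
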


\begin{proof} (1) $\Leftrightarrow$ (2) From Theorem \ref{WIN}, it follows that the assertion (1) is equivalent to the condition that
for any ideal $I$ of $R$, $R/I$ is a weakly IN $R$-module and idempotents lift modulo $Ann_R(R/I)=I$. Now using Theorem \ref{cyclics are weakly IN}
and the fact that $R$ is a clean ring if and only if idempotents lift modulo every ideal of $R$ (see \cite[Theorem 5.1]{AT}),
we obtain the desired equivalence.

(2) $\Rightarrow$ (3)  Since $R$ is a CF-ring, we have $R=R_1\times R_2\times \cdots \times R_n$ where each $R_i$ is an indecomposable ring which is either
a valuation ring or an h-local Pr\"{u}fer domain or a torch ring (\cite[Theorems 3.10 and 3.12]{SW}). Let $i\in \{1,\dots ,n\}$. Since $R$ is clean,
it follows that $R_i$ is also clean  by \cite[Proposition 2]{AC}. Moreover, $R_i$ is a local ring by \cite[Theorem 3]{AC}.
Hence $R_i$ could not be a torch ring since every torch ring has at least two maximal ideals (\cite[page 38]{Brandal}).
In addition, note that any local Pr\"{u}fer domain is a valuation ring.

(3) $\Rightarrow$ (2) By Theorem \ref{cyclics are weakly IN}, $R$ is a CF-ring. Moreover, note that any valuation ring is local.
Thus, using \cite[Proposition 2(1)-(3)]{AC}, we conclude that $R$ is a clean ring.
\end{proof}

\section{Finite Direct Sums of Weakly IN (Strongly CS) Modules}

We begin by providing necessary and sufficient conditions for a finite direct sum of modules to be weakly IN.

\begin{theor}\label{weakly IN direct sum} Let $M=M_1\oplus M_2\oplus \cdots \oplus M_n$ be a direct sum of submodules
$M_i (1 \leq i \leq n)$. Then the following are equivalent:

\noindent{\em (1)} $M$ is weakly IN;

\noindent{\em (2)} {\em (a)}  $M_i$ is a weakly IN $R$-module for all $i\in \{1,\dots ,n\}$, and

{\em (b)}  $Ann_R(M_j)+Ann_R(M_k)=R$ for all $j\neq k \in \{1,\dots ,n\}$.
\end{theor}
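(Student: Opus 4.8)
The plan is to prove the two implications separately, with $(1)\Rightarrow(2)$ being routine and the bulk of the work lying in $(2)\Rightarrow(1)$. For $(1)\Rightarrow(2)$, part (a) is immediate from Proposition \ref{closed-submodules}, since each $M_i$ is a submodule of the weakly IN module $M$; part (b) follows by applying the definition of weakly IN to the pair $M_j$, $M_k$, which satisfy $M_j\cap M_k=0$ because they are distinct summands of the direct sum. Thus $Ann_R(M_j)+Ann_R(M_k)=R$.

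For $(2)\Rightarrow(1)$, write $A_i=Ann_R(M_i)$. The first and crucial step is to show that hypothesis (b) forces every submodule of $M$ to respect the decomposition, i.e. $N=\bigoplus_{i=1}^{n}(N\cap M_i)$ for every $N\le M$. To see this, I would use pairwise comaximality to produce, for each $i$, elements $f_i\in A_i$ and $e_i\in\bigcap_{j\neq i}A_j$ with $e_i+f_i=1$; this is possible because $A_i+\bigcap_{j\neq i}A_j=R$, a consequence of $A_i+A_j=R$ for all $j\neq i$ (an ideal comaximal with each member of a finite family is comaximal with their intersection, since the product is contained in the intersection). Then multiplication by $e_i$ acts as the identity on $M_i$ and annihilates every $M_j$ with $j\neq i$, so $e_i$ realizes the $R$-linear projection of $M$ onto $M_i$. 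Since $e_i\in R$, for any submodule $N$ and any $x\in N$ we have $e_ix\in N\cap M_i$ and $x=\sum_i e_i x$; hence $N=\bigoplus_i(N\cap M_i)$.

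With this decomposition in hand, let $N,L\le M$ with $N\cap L=0$, and set $N_i=N\cap M_i$, $L_i=L\cap M_i$, $P_i=Ann_R(N_i)$, $Q_i=Ann_R(L_i)$. From $N\cap L=0$ we get $N_i\cap L_i=0$, and since $M_i$ is weakly IN (hypothesis (a)) this gives $P_i+Q_i=R$. Next I would observe that $A_i\subseteq P_i$ and $A_i\subseteq Q_i$, so the comaximality $A_i+A_j=R$ upgrades to the statement that all of the ideals $P_1,\dots,P_n,Q_1,\dots,Q_n$ are pairwise comaximal (including the cross terms $P_i+Q_j=R$ for $i\neq j$). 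Applying the comaximal-intersection fact again, I would deduce first $P_i+\bigcap_k Q_k=R$ for every $i$, and then $\bigcap_i P_i+\bigcap_k Q_k=R$. Since $Ann_R(N)=Ann_R(\bigoplus_i N_i)=\bigcap_i P_i$ and likewise $Ann_R(L)=\bigcap_i Q_i$, this is exactly $Ann_R(N)+Ann_R(L)=R$, so $M$ is weakly IN.

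The main obstacle is the passage from the local comaximalities $P_i+Q_i=R$ to the global identity $\bigcap_i P_i+\bigcap_i Q_i=R$; this is precisely where hypothesis (b) must be invoked a second time, not only to split the submodules but also to supply the cross comaximalities $P_i+Q_j=R$ for $i\neq j$ that make the intersection argument go through. Equivalently, one could run an induction on $n$ by grouping $M_2\oplus\cdots\oplus M_n$, checking that $A_1$ is comaximal with $\bigcap_{j\geq 2}A_j$ and that the grouped summand is again weakly IN; both routes rest on the same two facts, namely the splitting of submodules under comaximality and comaximality of an ideal with an intersection.
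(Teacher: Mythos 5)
Your proof is correct and takes essentially the same route as the paper: decompose $N$ and $L$ as $\bigoplus_{i=1}^{n}(N\cap M_i)$ and $\bigoplus_{i=1}^{n}(L\cap M_i)$ via the pairwise comaximality of the $Ann_R(M_i)$, apply hypothesis (a) componentwise, and use the cross comaximalities $Ann_R(N\cap M_i)+Ann_R(L\cap M_j)=R$ for $i\neq j$ to conclude that the two intersections of annihilators are comaximal. The only difference is that you give direct elementary proofs (via the idempotent-like elements $e_i$ and the fact that an ideal comaximal with each member of a finite family is comaximal with their intersection) of the two ingredients the paper simply cites, namely \cite[Lemma 2.6]{BO} for the splitting and \cite[Theorem 31]{Zariski et Sumuel:1979} for the comaximality of intersections.
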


\begin{proof} (1) $\Rightarrow$ (2) (a) follows from Proposition \ref{closed-submodules} and (b) follows from the definition of a weakly IN module.

(2) $\Rightarrow$ (1) Let $N$ and $L$ be two submodules of $M$ such that $N\cap L=0$. Using (b) and \cite[Lemma 2.6]{BO},
we get $N=\oplus _{i=1}^n(N\cap M_i)$ and $L=\oplus _{i=1}^n(L\cap M_i)$. Fix $i\in \{1,\dots ,n\}$.
As $N\cap L=0$, we have $(N\cap M_i)\cap (L\cap M_i)=0$. Since $M_i$ is weakly IN, we have $Ann_R(N\cap M_i)+Ann_R(L\cap M_i)=R$.
Moreover, using (b), it follows that for any $i\neq j \in \{1,\dots ,n\}$, we have $Ann_R(N\cap M_i)+Ann_R(L\cap M_j)=R$
as $Ann_R(M_i) \subseteq Ann_R(N\cap M_i)$ and $Ann_R(M_j) \subseteq Ann_R(L\cap M_j)$.
So $(\cap _{i=1}^nAnn_R(N\cap M_i)) + (\cap _{i=1}^nAnn_R(L\cap M_i))=R$ by \cite[Theorem 31]{Zariski et Sumuel:1979}.
Consequently, $Ann_R(N)+Ann_R(L)=R$. This completes the proof.
\end{proof}

In the next theorem, we provide a characterization of when a finite direct sum of modules is strongly CS. We need the following lemma.

\begin{lem}\label{product-CS-rings} Let $R_1$ and $R_2$ be two rings and let $M_i$ be an $R_i$-module $(i=1, 2)$.
Then the following hold true:

\noindent{\em (1)} $M_1\times M_2$ is a strongly CS $R_1\times R_2$-module if and only if $M_i$ is a strongly CS $R_i$-module for each $i=1, 2$.

\noindent{\em (2)} $R=R_1\times R_2$ is a CS ring if and only if so are $R_1$ and $R_2$.
\end{lem}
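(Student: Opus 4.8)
The plan is to reduce everything to the action of the two central idempotents $\varepsilon_1=(1,0)$ and $\varepsilon_2=(0,1)$ of $R=R_1\times R_2$, which split every relevant object into its two components. First I would record the submodule correspondence. Viewing each $R_i$-module $M_i$ as an $R$-module through the projection $R\to R_i$, any $R$-submodule $N$ of $M=M_1\times M_2$ satisfies $N=\varepsilon_1N\oplus\varepsilon_2N$ with $\varepsilon_1N=N\cap(M_1\times 0)$ and $\varepsilon_2N=N\cap(0\times M_2)$; hence the $R$-submodules of $M$ are exactly the sets $N_1\times N_2$ with $N_i$ an $R_i$-submodule of $M_i$. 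In the same way, every idempotent of $R$ is a pair $e=(e_1,e_2)$ of idempotents $e_i\in R_i$, and then $eM=e_1M_1\times e_2M_2$.

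The second ingredient is that essentiality is componentwise: for $R_i$-submodules $N_i,K_i$ of $M_i$ one has $N_1\times N_2\subseteq^{ess}K_1\times K_2$ if and only if $N_i\subseteq^{ess}K_i$ for $i=1,2$. The only direction needing an argument uses that any nonzero submodule of $K_1\times K_2$ contains a nonzero submodule of the form $L_1\times 0$ or $0\times L_2$, so that testing the essentiality condition against these reduces it to each component separately; note in particular that $0\subseteq^{ess}K_2$ forces $K_2=0$.

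With these two facts, part (1) follows formally. For the ``if'' direction, given a submodule $N=N_1\times N_2$ of $M$, the hypothesis yields idempotents $e_i\in R_i$ with $N_i\subseteq^{ess}e_iM_i$; then $e=(e_1,e_2)$ is idempotent and, by componentwise essentiality, $N\subseteq^{ess}e_1M_1\times e_2M_2=eM$, so $M$ is strongly CS. For the ``only if'' direction, fix $i=1$ (the case $i=2$ being symmetric) and apply the strongly CS hypothesis to the submodule $N_1\times 0$: there is an idempotent $(e_1,e_2)\in R$ with $N_1\times 0\subseteq^{ess}e_1M_1\times e_2M_2$, whence componentwise essentiality gives $N_1\subseteq^{ess}e_1M_1$ (and incidentally $e_2M_2=0$), so $M_1$ is strongly CS.

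Finally, part (2) is simply the specialization of (1) to $M_i=R_i$ together with the equivalence, recorded in Remark \ref{submodules of WIN modules}(1), that a ring is CS exactly when it is strongly CS as a module over itself: $R=R_1\times R_2$ is CS iff $R$ is strongly CS as an $R$-module iff each $R_i$ is strongly CS as an $R_i$-module iff each $R_i$ is CS. I do not anticipate any genuine difficulty; the only steps requiring care are the submodule decomposition via the central idempotents and the componentwise-essentiality lemma, after which both assertions drop out.
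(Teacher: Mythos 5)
Your proposal is correct and follows essentially the same route as the paper's proof: both rest on the product form $N_1\times N_2$ of submodules of $M_1\times M_2$, the fact that idempotents of $R_1\times R_2$ are pairs of idempotents, and the componentwise-essentiality property (the paper's property $(\ast)$), with part (2) obtained by specializing to $M_i=R_i$ via Remark \ref{submodules of WIN modules}(1). Your write-up merely supplies slightly more detail (the decomposition $N=\varepsilon_1N\oplus\varepsilon_2N$ and the justification of $(\ast)$) than the paper, which takes these as elementary.
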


\begin{proof} (1) We will use the following elementary property: $(\ast)$ given an $R_1$-submodule $N_1$ of $M_1$ and an $R_2$-submodule $N_2$ of $M_2$,
$N_1\times N_2\subseteq ^{ess}M_1\times M_2$ if and only if $N_1\subseteq ^{ess}M_1$ and $N_2\subseteq ^{ess}M_2$.

Now suppose that $M_1\times M_2$ is a strongly CS $R_1\times R_2$-module and let $N_1$ be an $R_1$-submodule of $M_1$ and $N_2$ an $R_2$-submodule of $M_2$.
Then there exists $(e_1, e_2)=(e_1, e_2)^2=(e_1^2, e_2^2) \in R_1\times R_2$ such that $N_1\times N_2\subseteq ^{ess} (e_1,e_1)(M_1\times M_2)=
e_1M_1\times e_2M_2$. By $(\ast)$, it follows that $N_1\subseteq ^{ess}e_1M_1$ and $N_2\subseteq ^{ess}e_2M_2$.
This clearly implies that each $M_i$ ($i=1, 2$) is a strongly CS $R_i$-module. Conversely, let $N$ be an $R_1\times R_2$-submodule of $M_1\times M_2$.
Then $N=N_1\times N_2$, where $N_1$ is an $R_1$-submodule of $M_1$ and $N_2$ is an $R_2$-submodule of $M_2$.
Let $e_1^2=e_1\in R_1$ and $e_2^2=e_2\in R_2$ such that $N_1\subseteq ^{ess}e_1M_1$ and $N_2\subseteq ^{ess}e_2M_2$.
Again by $(\ast)$, we have $N_1\times N_2\subseteq ^{ess} e_1M_1\times e_2M_2=(e_1,e_1)(M_1\times M_2)$.
Since $(e_1,e_2)^2=(e_1,e_2)$, we conclude that $M_1\times M_2$ is a strongly CS $R_1\times R_2$-module.

(2) Apply (1) for $M_1=R_1$ and $M_2=R_2$ (see Remark \ref{submodules of WIN modules}(1)).
\end{proof}

\begin{theor}\label{finite strongly CS direct sum} Let $M=M_1\oplus M_2\oplus \cdots \oplus M_n$ be a direct sum of submodules
$M_i (1 \leq i \leq n)$. Then the following are equivalent:

\noindent{\em (1)} $M$ is strongly CS;

\noindent{\em (2)} $M$ satisfies the following two conditions:

\noindent{\em (a)} $M_i$ is a strongly CS $R$-module for every $i\in \{1,\dots ,n\}$, and

\noindent{\em (b)} There exists a complete set of orthogonal idempotents $\{e_1,\dots ,e_n\}$ of $R$ such that $e_iM_i=M_i$ for all $i\in \{1,\dots ,n\}$;

\noindent{\em (3)} $R=R_1\times R_2\times \cdots \times R_n$ such that $M_i$ is a strongly CS $R_i$-module for all $i\in \{1,\dots ,n\}$.
\end{theor}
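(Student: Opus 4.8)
The plan is to prove the cycle $(1)\Rightarrow(2)\Rightarrow(3)\Rightarrow(1)$. The implication $(3)\Rightarrow(1)$ is the quickest: writing $R=R_1\times(R_2\times\cdots\times R_n)$ and correspondingly $M=M_1\times(M_2\times\cdots\times M_n)$, a straightforward induction on $n$ fed by Lemma \ref{product-CS-rings}(1) shows that $M$ is strongly CS over $R$. The entire difficulty is therefore concentrated in manufacturing, out of the single hypothesis that $M$ is strongly CS, a genuine complete set of orthogonal idempotents of $R$ adapted to the given decomposition.

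For $(1)\Rightarrow(2)$, part (a) is immediate from Proposition \ref{closed-submodules}, since each $M_i$ is a submodule of $M$. For part (b) I would invoke the equivalence $(1)\Leftrightarrow(4)$ of Theorem \ref{WIN}: as each $M_i$ is a direct summand of $M$, there is a genuine idempotent $e_i\in R$ with $M_i=e_iM$. The key computation is then to extract the ``near-orthogonality'' relations. Writing $e_iM=\bigoplus_k e_iM_k$ and comparing components in $M=\bigoplus_k M_k$, one reads off $e_iM_i=M_i$ and $e_iM_k=0$ for $k\neq i$; from these it follows readily that $e_ie_j\in Ann_R(M)$ for $i\neq j$ and that $1-\sum_i e_i\in Ann_R(M)$.

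At this stage the $e_i$ are honest idempotents that are orthogonal and complete only modulo $Ann_R(M)$, and converting them into an exact complete orthogonal family is the main obstacle. Since $R$ is commutative, products of idempotents are again idempotents, so I would orthogonalize by the standard disjointification $g_i=e_i\prod_{j<i}(1-e_j)$: each $g_i$ is idempotent, $g_ig_k=0$ for $i\neq k$ (a factor $e_i(1-e_i)$ occurs in the product), and using $e_ie_j\in Ann_R(M)$ one checks $g_iM=e_iM=M_i$. The telescoping identity $\sum_i g_i=1-\prod_j(1-e_j)$ shows that the $g_i$ still fall short of completeness by the idempotent $h=\prod_j(1-e_j)$; but the cross relations force $h\in Ann_R(M)$, so $h$ is orthogonal to every $g_i$ and $hM=0$. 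Replacing $g_1$ by $g_1+h$ then repairs completeness without altering any $g_iM$, producing a complete orthogonal set with $g_iM_i=g_i(g_iM)=g_iM=M_i$, as required.

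Finally, for $(2)\Rightarrow(3)$ I would set $R_i=e_iR$, so that the complete orthogonal family $\{e_i\}$ yields the ring decomposition $R=R_1\times\cdots\times R_n$. The relation $e_iM_i=M_i$ forces $e_i$ to act as the identity on $M_i$, whence $M_i$ is an $R_i$-module whose submodule lattice coincides with its lattice of $R$-submodules. Transporting the witnessing idempotents of Theorem \ref{WIN} along $e\mapsto ee_i$ then shows that the $R$-strongly-CS module $M_i$ is strongly CS over $R_i$, which is precisely assertion (3), closing the cycle.
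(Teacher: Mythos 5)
Your argument is correct; it coincides with the paper's proof for $(2)\Rightarrow(3)$ and $(3)\Rightarrow(1)$, and diverges genuinely only in how $(1)\Rightarrow(2)$(b) is established. The paper argues by induction on $n$: the base case $n=2$ comes from Theorem \ref{WIN}((1)$\Rightarrow$(2)), which supplies a single idempotent $e$ with $1-e\in Ann_R(M_1)$ and $e\in Ann_R(M_2)$, whence $M_1=eM$ and $M_2=(1-e)M$; the inductive step applies this to the splitting $(M_1\oplus\cdots\oplus M_n)\oplus M_{n+1}$, uses Proposition \ref{closed-submodules} to see that $M_1\oplus\cdots\oplus M_n$ is again strongly CS, and splices the inductively obtained family $\{f_1,\dots,f_n\}$ with $e$ into $\{ef_1,\dots,ef_n,\,1-e\}$ --- so exact orthogonality and completeness hold at every stage and no correction modulo $Ann_R(M)$ is ever needed. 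You instead extract all the $e_i$ simultaneously from Theorem \ref{WIN}((1)$\Leftrightarrow$(4)), observe that they are orthogonal and complete only modulo $Ann_R(M)$, and repair this defect by the disjointification $g_i=e_i\prod_{j<i}(1-e_j)$, finally absorbing the leftover idempotent $h=\prod_j(1-e_j)\in Ann_R(M)$ into $g_1$. The verifications you sketch all go through: $g_i-e_i\in Ann_R(M)$ (every correction term carries a factor $e_ie_j$ with $j<i$), hence $g_iM=e_iM=M_i$; $g_ig_k=0$ exactly, via the factor $e_i(1-e_i)$; and $hM=0$ because each $e_k$, being idempotent with $e_kM_k=M_k$, acts as the identity on $M_k$. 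The trade-off: the paper's induction is shorter and stays entirely within two-summand applications of Theorem \ref{WIN}, whereas your one-shot construction is non-inductive and isolates the actual phenomenon --- the idempotents canonically attached to the summands form a complete orthogonal family only modulo the annihilator, and exactness is recoverable precisely because each $e_i$ is already an honest idempotent of $R$, so standard disjointification applies. Your remaining steps --- for $(2)\Rightarrow(3)$, passing from a witnessing idempotent $e$ to $ee_i\in R_i=e_iR$ and using that the $R$- and $R_i$-submodule lattices of $M_i$ coincide; for $(3)\Rightarrow(1)$, induction on $n$ fed by Lemma \ref{product-CS-rings}(1) --- are exactly the paper's.
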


\begin{proof} (1) $\Rightarrow$ (2) (a) follows by using Proposition \ref{closed-submodules}. Let us show (b) by induction on $n$.
Suppose that $M=M_1\oplus M_2$ is strongly CS. By Theorem \ref{WIN}, there exists an idempotent $e$ of $R$ such that $1-e\in Ann_R(M_1)$
and $e\in \Ann_R(M_2)$. This implies that $M_1=eM$ and $M_2=(1-e)M$. Therefore (b) is true for $n=2$. Now assume (b) holds for $n$;
we will prove it for $n+1$. Let $M=M_1\oplus \cdots \oplus M_n\oplus M_{n+1}$ be strongly CS. From the case $n=2$, we infer that
there exists an idempotent $e$ of $R$ such that $e(M_1\oplus \cdots \oplus M_n)=M_1\oplus \cdots \oplus M_n$ and $(1-e)M_{n+1}=M_{n+1}$.
But $M_1\oplus \cdots \oplus M_n$ is a strongly CS $R$-module as it is a submodule of $M$, so, by induction hypothesis there exists a complete set
of orthogonal idempotents $\{f_1,\dots ,f_n\}$ of $R$ such that $M_i=f_iM_i$ for all $i\in \{1,\dots ,n\}$.
It is easy to see that $\{ef_1,\dots ,ef_n,1-e\}$ is a complete set of orthogonal idempotents of $R$ such that
$ef_iM_i=eM_i=M_i$ for all $i\in \{1,\dots ,n\}$ and $(1-e)M_{n+1}=M_{n+1}$.

(2) $\Rightarrow$ (3) By (b), $R=R_1\times R_2\times \cdots \times R_n$ where $R_i=e_iR$ for all $i\in \{1,\dots ,n\}$. Fix $j \in \{1,\dots ,n\}$.
Since $e_jM_j=M_j$, $M_j$ has a natural structure of an $R_j$-module. Let $N$ be an $R_j$-submodule of $M_j$.
By (a), $M_j$ is a strongly CS $R$-module. So there exists $e^2=e\in R$ such that $N\subseteq^{ess}eM_j=ee_jM_j$ as $R$-modules and also as $e_jR$-modules.
Note that $ee_j$ is an idempotent of $R_j$. Thus $M_j$  is a strongly CS $R_j$-module.

(3) $\Rightarrow$ (1) This follows from Lemma \ref{product-CS-rings}(1).
\end{proof}

As an application of the preceding two theorems, we have the following corollary.

\begin{cor}\label{prime ideals}  Let $\mathfrak{p}$ and $\mathfrak{q}$ be two prime ideals of a ring $R$. Then the following hold true:

\noindent{\em (1)} The $R$-module $R/\mathfrak{p}\oplus R/\mathfrak{q}$ is weakly IN if and only if $\mathfrak{p}+\mathfrak{q}=R$.

\noindent{\em (2)} The following are equivalent:

\noindent{\em (a)} The $R$-module $R/\mathfrak{p}\oplus R/\mathfrak{q}$ is strongly CS;

\noindent{\em (b)} There exists an idempotent $e$ of $R$ such that $e\in \mathfrak{p}$ and $1-e\in \mathfrak{q}$;

\noindent{\em (c)} $\mathfrak{p}+\mathfrak{q}=R$ and idempotents lift modulo $\mathfrak{p}\cap\mathfrak{q}$.
\end{cor}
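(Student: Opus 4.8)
My plan is to reduce both parts to the general direct-sum results of this section together with Theorem \ref{WIN}, exploiting the fact that $R/\mathfrak{p}$ and $R/\mathfrak{q}$, being domains, are uniform $R$-modules. For part (1), I would apply Theorem \ref{weakly IN direct sum} with $n=2$, $M_1=R/\mathfrak{p}$ and $M_2=R/\mathfrak{q}$. Since $R/\mathfrak{p}$ is a domain it is uniform, hence strongly CS by Remark \ref{submodules of WIN modules}(3) and therefore weakly IN by Theorem \ref{WIN}; the same holds for $R/\mathfrak{q}$. Thus condition (2)(a) of Theorem \ref{weakly IN direct sum} is automatic, and the module is weakly IN precisely when condition (2)(b) holds. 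As $Ann_R(R/\mathfrak{p})=\mathfrak{p}$ and $Ann_R(R/\mathfrak{q})=\mathfrak{q}$, condition (2)(b) reads $\mathfrak{p}+\mathfrak{q}=R$, which gives the stated equivalence.

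For part (2), I would prove the cycle (a) $\Rightarrow$ (b) $\Rightarrow$ (c) $\Rightarrow$ (a). For (a) $\Rightarrow$ (b), apply the implication (1) $\Rightarrow$ (2) of Theorem \ref{WIN} to the pair $N=R/\mathfrak{p}\oplus 0$ and $L=0\oplus R/\mathfrak{q}$, whose intersection is zero and whose annihilators are $\mathfrak{p}$ and $\mathfrak{q}$; the resulting idempotent $e$ satisfies $e\in\mathfrak{p}$ and $1-e\in\mathfrak{q}$. For (b) $\Rightarrow$ (c), the relation $1=e+(1-e)$ immediately yields $\mathfrak{p}+\mathfrak{q}=R$; then the Chinese Remainder Theorem gives $R/(\mathfrak{p}\cap\mathfrak{q})\cong R/\mathfrak{p}\times R/\mathfrak{q}$, and since each factor is a domain its only idempotents are $0$ and $1$, so the product has exactly the four idempotents $(0,0)$, $(1,0)$, $(0,1)$, $(1,1)$, which lift to $0$, $1-e$, $e$, $1$ respectively; hence idempotents lift modulo $\mathfrak{p}\cap\mathfrak{q}$. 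For (c) $\Rightarrow$ (a), I would observe that $Ann_R(R/\mathfrak{p}\oplus R/\mathfrak{q})=\mathfrak{p}\cap\mathfrak{q}$, note that $\mathfrak{p}+\mathfrak{q}=R$ makes the module weakly IN by part (1), and then invoke the implication (3) $\Rightarrow$ (1) of Theorem \ref{WIN}, since idempotents lift modulo $Ann_R(M)=\mathfrak{p}\cap\mathfrak{q}$ by hypothesis.

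The computations are all routine; the only point requiring genuine care is the verification in (b) $\Rightarrow$ (c) that \emph{every} idempotent of the quotient lifts, not merely the one coming from $e$. The key observation making this work is that the two residue rings are domains, so the idempotent set of the quotient is forced to be the Boolean algebra on $\{(0,0),(1,0),(0,1),(1,1)\}$, whose four elements are accounted for by $0$, $1-e$, $e$, $1$. Once this is in place, the remaining implications are direct appeals to Theorems \ref{weakly IN direct sum} and \ref{WIN}, so I expect no further obstacle.
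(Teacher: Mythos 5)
Your proof is correct. Part (1) and the implications (a) $\Rightarrow$ (b) and (c) $\Rightarrow$ (a) coincide with the paper's own argument: uniformity of $R/\mathfrak{p}$ and $R/\mathfrak{q}$ makes them strongly CS, hence weakly IN, so Theorem \ref{weakly IN direct sum} reduces part (1) to $\mathfrak{p}+\mathfrak{q}=R$, and Theorem \ref{WIN} together with $Ann_R(R/\mathfrak{p}\oplus R/\mathfrak{q})=\mathfrak{p}\cap\mathfrak{q}$ handles (a) $\Rightarrow$ (b) and (c) $\Rightarrow$ (a). Where you genuinely diverge is in how the cycle is closed. The paper never proves (b) $\Rightarrow$ (c) directly; it instead shows (b) $\Rightarrow$ (a) by observing that $(1-e)(R/\mathfrak{p})=R/\mathfrak{p}$ and $e(R/\mathfrak{q})=R/\mathfrak{q}$, so that $\{1-e,e\}$ is a complete set of orthogonal idempotents to which Theorem \ref{finite strongly CS direct sum}((2) $\Rightarrow$ (1)) applies, the summands being strongly CS. You replace this with a direct verification of (b) $\Rightarrow$ (c): from $1=e+(1-e)$ you get $\mathfrak{p}+\mathfrak{q}=R$, the Chinese Remainder Theorem identifies $R/(\mathfrak{p}\cap\mathfrak{q})$ with a product of two domains whose only idempotents are the four obvious ones, and these lift to $0$, $1-e$, $e$, $1$ respectively (your assignment $e\mapsto(0,1)$, $1-e\mapsto(1,0)$ is the right one, since $e\in\mathfrak{p}$ and $e\equiv 1 \bmod \mathfrak{q}$). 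The subtlety you flagged --- that \emph{every} idempotent of the quotient must lift, not merely the image of $e$ --- is exactly what this CRT computation settles, and it is precisely the point the paper sidesteps by routing (b) through (a). The trade-off: the paper's route reuses the direct-sum machinery of Theorem \ref{finite strongly CS direct sum} at no extra cost, while yours is self-contained at that step, avoids that theorem entirely, and makes explicit the pleasant fact that a single purifying idempotent already forces full idempotent lifting modulo $\mathfrak{p}\cap\mathfrak{q}$.
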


\begin{proof} Since the $R$-modules $R/\mathfrak{p}$ and $R/\mathfrak{q}$ are uniform, they are strongly CS (and hence also weakly IN).

(1) Use Theorem \ref{weakly IN direct sum} and the fact that $R/\mathfrak{p}$ and $R/\mathfrak{q}$ are weakly IN.

(2) (a) $\Leftrightarrow$ (c) Use (1), Theorem \ref{WIN} and the fact that $Ann_R(R/\mathfrak{p}\oplus R/\mathfrak{q})=\mathfrak{p}\cap\mathfrak{q}$.

(a) $\Rightarrow$ (b) By Theorem \ref{WIN}, there exists an idempotent $e$ of $R$ such that $e\in Ann_R(R/\mathfrak{p})=\mathfrak{p}$
and $1-e\in Ann_R(R/\mathfrak{q})=\mathfrak{q}$.

(b) $\Rightarrow$ (a) Let $e$ be an idempotent of $R$ such that $e\in \mathfrak{p}$ and $1-e\in \mathfrak{q}$.
Then $(1-e)(R/\mathfrak{p})=R/\mathfrak{p}$ and $e(R/\mathfrak{q})=R/\mathfrak{q}$.
Now using Theorem \ref{finite strongly CS direct sum}((2) $\Rightarrow$ (1)) and the fact that $R/\mathfrak{p}$ and $R/\mathfrak{q}$
are strongly CS $R$-modules, we deduce that $R/\mathfrak{p}\oplus R/\mathfrak{q}$ is a strongly CS $R$-module.
\end{proof}

In contrast to Example \ref{weakly IN-not-strongly CS}, we characterize in the next theorem the class of rings $R$ for which
every weakly IN $R$-module is strongly CS.

\begin{theor}\label{clean rings} The following are equivalent for a ring $R$:

\noindent{\em (1)} Any weakly IN $R$-module is strongly CS;

\noindent{\em (2)} $R/\mathfrak{m}\oplus R/\mathfrak{m}'$ is a strongly CS $R$-module for every distinct maximal ideals $\mathfrak{m}$ and $\mathfrak{m}'$ of $R$;

\noindent{\em (3)} Idempotents of $R$ lift modulo $\mathfrak{m}\cap \mathfrak{m}'$ for every distinct maximal ideals $\mathfrak{m}$ and $\mathfrak{m}'$ of $R$;

\noindent{\em (4)} Idempotents lift modulo every ideal of $R$;

\noindent{\em (5)} For every distinct maximal ideals $\mathfrak{m}$ and $\mathfrak{m}'$ of $R$, there exists an idempotent $e$ of $R$ such that $e\in \mathfrak{m}$ and $1-e\in \mathfrak{m}'$;

\noindent{\em (6)} $R$ is a clean ring.
\end{theor}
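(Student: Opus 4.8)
The plan is to prove the chain
$(6)\Leftrightarrow(4)\Rightarrow(1)\Rightarrow(2)\Leftrightarrow(3)\Leftrightarrow(5)\Rightarrow(6)$,
which makes all six conditions equivalent. Almost every link here is essentially free from material already available, and the entire difficulty is concentrated in the last step $(5)\Rightarrow(6)$. First I would dispatch the cheap implications. The equivalence $(4)\Leftrightarrow(6)$ is exactly \cite[Theorem 5.1]{AT}, that a commutative ring is clean iff idempotents lift modulo every ideal. For $(4)\Rightarrow(1)$, let $M$ be weakly IN; idempotents lift modulo every ideal, in particular modulo $Ann_R(M)$, so $M$ is strongly CS by Theorem \ref{WIN}. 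The implication $(1)\Rightarrow(2)$ is immediate: for distinct maximal ideals $\mathfrak{m},\mathfrak{m}'$ one has $\mathfrak{m}+\mathfrak{m}'=R$, hence $R/\mathfrak{m}\oplus R/\mathfrak{m}'$ is weakly IN by Corollary \ref{prime ideals}(1), and (1) forces it to be strongly CS. Finally, since distinct maximal ideals are automatically comaximal, Corollary \ref{prime ideals}(2) applied with $\mathfrak{p}=\mathfrak{m}$, $\mathfrak{q}=\mathfrak{m}'$ gives $(2)\Leftrightarrow(3)\Leftrightarrow(5)$ at once, the clause $\mathfrak{m}+\mathfrak{m}'=R$ in part (c) being vacuous.

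The decisive step is $(5)\Rightarrow(6)$: promoting pairwise separation of maximal ideals by idempotents to the global assertion that $R$ is clean. I would argue topologically on $\mathrm{Max}(R)$ with the Zariski topology, using two standard facts. First, $\mathrm{Max}(R)$ is quasi-compact, since an ideal contained in no maximal ideal is all of $R$. Second, for an idempotent $e$ the set $V(e)\cap\mathrm{Max}(R)=\{\mathfrak{m}:e\in\mathfrak{m}\}$ is clopen, with complement $V(1-e)\cap\mathrm{Max}(R)$; these ``idempotent clopens'' are closed under finite unions and complements (via $V(e)\cup V(f)=V(ef)$ and the complement $V(1-e)$), so they form a Boolean subalgebra of the clopen sets, and every finite union of them is again of the form $V(g)$ with $g$ idempotent.

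Fix $a\in R$, and set $A=V(a)\cap\mathrm{Max}(R)$ and $B=V(a-1)\cap\mathrm{Max}(R)$; these are disjoint closed subsets. One checks that $a-e$ is a unit for an idempotent $e$ as soon as $A\subseteq V(1-e)$ and $B\subseteq V(e)$: on $A$ one gets $a-e\equiv -1$, on $B$ one gets $a-e\equiv 1$, and off $A\cup B$ both $a$ and $a-1$ are units modulo $\mathfrak{m}$, so $a-e\equiv a$ or $a-1$ is a unit modulo every maximal ideal. Thus it suffices to separate the disjoint closed sets $A$ and $B$ by an idempotent clopen.

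This separation is where compactness does the work, and it is the part I expect to be the main obstacle, since (5) only separates single pairs of points. I would promote it to closed sets in two steps. Fixing $\mathfrak{m}'\in B$, for each $\mathfrak{m}\in A$ choose by (5) an idempotent clopen containing $\mathfrak{m}$ but not $\mathfrak{m}'$; quasi-compactness of $A$ yields a finite union $D_{\mathfrak{m}'}$, again an idempotent clopen, with $A\subseteq D_{\mathfrak{m}'}$ and $\mathfrak{m}'\notin D_{\mathfrak{m}'}$. The complements $D_{\mathfrak{m}'}^{c}$ are idempotent clopens covering $B$ and disjoint from $A$; quasi-compactness of $B$ gives a finite union $E=V(e)$ with $B\subseteq E$ and $A\cap E=\emptyset$. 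The idempotent $e$ then satisfies $A\subseteq V(1-e)$ and $B\subseteq V(e)$, so $a-e$ is a unit and $a=(a-e)+e$ is clean; as $a$ was arbitrary, $R$ is clean, giving (6). Equivalently, one can package the same content through the Pierce sheaf: condition (5) says no two distinct maximal ideals lie over the same point of the Pierce spectrum, so every stalk is local, which is another guise of cleanness.
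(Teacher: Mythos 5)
Your proposal is correct, and its overall skeleton coincides with the paper's: the paper also proves the cycle through the module-theoretic arrows (4) $\Rightarrow$ (1) via Theorem \ref{WIN}, (1) $\Rightarrow$ (2) via Corollary \ref{prime ideals}(1), and (2) $\Leftrightarrow$ (3) $\Leftrightarrow$ (5) via Corollary \ref{prime ideals}(2), exactly as you do. The one genuine divergence is the ring-theoretic core: the paper disposes of \emph{all three} equivalences (4) $\Leftrightarrow$ (5) $\Leftrightarrow$ (6) by a single citation of \cite[Theorem 5.1]{AT}, whereas you invoke that reference only for (4) $\Leftrightarrow$ (6) and supply your own proof of (5) $\Rightarrow$ (6). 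Your argument checks out: the idempotent clopens in $\mathrm{Max}(R)$ do form a Boolean algebra ($V(e)\cup V(f)=V(ef)$, complement $V(1-e)$, and intersections via De Morgan or $V(e)\cap V(f)=V(e+f-ef)$); the unit computation for $a-e$ is correct in all three cases, since modulo a maximal ideal an idempotent is $0$ or $1$; and the two-step compactness argument (first shrinking over $A$ for fixed $\mathfrak{m}'\in B$, then covering $B$ by the complements) is the standard way to promote pointwise separation to separation of disjoint closed sets, using that $A$ and $B$ are closed in the quasi-compact space $\mathrm{Max}(R)$. In effect you reprove the hard half of the cited theorem of Aghajani--Tarizadeh; what this buys is a self-contained proof whose only external input is the elementary equivalence ``clean $\Leftrightarrow$ idempotents lift modulo every ideal,'' at the cost of a page of topology the paper avoids. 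Two small points you could make explicit: the degenerate cases $A=\emptyset$ or $B=\emptyset$ are covered by the conventions that the empty union of clopens is $V(1)=\emptyset$ and its complement is $V(0)=\mathrm{Max}(R)$ (yielding $e=1$ or $e=0$, respectively, and the unit verification still goes through); and in (4) $\Rightarrow$ (1) the application of Theorem \ref{WIN} tacitly assumes $M\neq 0$, the zero module being trivially strongly CS.
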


\begin{proof} The equivalences (4) $\Leftrightarrow$ (5)  $\Leftrightarrow$ (6) follow from \cite[Theorem 5.1]{AT}.

The equivalences (2) $\Leftrightarrow$ (3) $\Leftrightarrow$ (5) follow from Corollary \ref{prime ideals}(2).

(4) $\Rightarrow$ (1) Use the equivalence (1) $\Leftrightarrow$ (3) in Theorem \ref{WIN}.

(1) $\Rightarrow$ (2) Let $M=R/\mathfrak{m}\oplus R/\mathfrak{m}'$ where $\mathfrak{m}$ and $\mathfrak{m}'$ are two distinct maximal ideals of $R$.
By Corollary \ref{prime ideals}(1), $M$ is a weakly IN $R$-module and so it is strongly CS by (1).
\end{proof}

In the same vein of Example \ref{weakly IN-not-strongly CS}, we exhibit the following examples.

\begin{ex}\em (1) Let $R$ be a ring which is not clean. By Theorem \ref{clean rings}, there exists an $R$-module $M$ such that $M$ is weakly IN
but not strongly CS. To construct an explicit example of such a ring $R$ and such a module $M$,
consider the ring $R={\Bbb Z}$ and the $\mathbb{Z}$-module $M={\Bbb Z}/2{\Bbb Z}\oplus {\Bbb Z}/3{\Bbb Z}$.
Since $2{\Bbb Z}+3{\Bbb Z}={\Bbb Z}$, $M$ is a weakly IN ${\Bbb Z}$-module (Corollary \ref{prime ideals}(1)).
However, note that $M$ is not uniform. Then $M$ can not be a strongly CS $R$-module since $R$ is indecomposable
(Remark \ref{submodules of WIN modules}(3)).

(2) Let $R=\mathbb{Z}$ and consider the $R$-module $N=\mathbb{Q}/\mathbb{Z}_{(2\mathbb{Z})}$. By \cite[Example 3.13]{SW},
$A=R\propto N$ is a Torch ring. Thus, every cyclic $A$-module is weakly IN by Theorem \ref{cyclics are weakly IN}.
On the other hand, since $R$ is not a clean ring, $A$ is not clean by \cite[Theorem 6.4]{AW}.
Therefore the ring $A$ has a cyclic $A$-module which is not strongly CS by Theorem \ref{cyclics are strongly CS}.
\end{ex}

Recall that a ring $R$ is called {\it zero-dimensional} if every prime ideal of $R$ is maximal.

\begin{prop}\label{0-dimensional-rings} For a ring $R$ the following statements are equivalent:

\noindent{\em (1)} $R/\mathfrak{p}\oplus R/\mathfrak{p}'$ is a weakly IN $R$-module for every distinct prime ideals $\mathfrak{p}$ and $\mathfrak{p}'$ of $R$;

\noindent{\em (2)} $R/\mathfrak{p}\oplus R/\mathfrak{p}'$ is a strongly CS $R$-module for every distinct prime ideals $\mathfrak{p}$ and $\mathfrak{p}'$ of $R$;

\noindent{\em (3)} $R$ is a zero-dimensional ring.
\end{prop}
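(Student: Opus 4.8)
The plan is to run the cycle (2) $\Rightarrow$ (1) $\Rightarrow$ (3) $\Rightarrow$ (2), using Corollary \ref{prime ideals} throughout to translate each statement about $R/\mathfrak{p}\oplus R/\mathfrak{p}'$ into a condition on the pair $\mathfrak{p},\mathfrak{p}'$. The implication (2) $\Rightarrow$ (1) is immediate, since every strongly CS module is weakly IN by Theorem \ref{WIN}.

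Next I would settle (1) $\Leftrightarrow$ (3) in one stroke. By Corollary \ref{prime ideals}(1), condition (1) says exactly that $\mathfrak{p}+\mathfrak{p}'=R$ for every pair of distinct primes, i.e.\ that any two distinct primes are comaximal. If this holds and $\mathfrak{p}$ is a prime contained in a maximal ideal $\mathfrak{m}$, then were $\mathfrak{p}\neq\mathfrak{m}$ we would get $\mathfrak{m}=\mathfrak{p}+\mathfrak{m}=R$, which is absurd; hence $\mathfrak{p}=\mathfrak{m}$ is maximal and $R$ is zero-dimensional. Conversely, in a zero-dimensional ring all primes are maximal and distinct maximal ideals are automatically comaximal, so (1) follows again from Corollary \ref{prime ideals}(1).

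The substance of the proposition is (3) $\Rightarrow$ (2), and this is where I expect the real work. Fix distinct primes $\mathfrak{p},\mathfrak{p}'$; by zero-dimensionality they are maximal, hence comaximal, so I may choose $a\in\mathfrak{p}$ with $1-a\in\mathfrak{p}'$. By Corollary \ref{prime ideals}(2) it suffices to manufacture an idempotent $e\in R$ with $e\in\mathfrak{p}$ and $1-e\in\mathfrak{p}'$. Here I would invoke the standard fact that a commutative ring is zero-dimensional if and only if for every $a\in R$ there exist $n\geq 1$ and $x\in R$ with $a^{n}=a^{n+1}x$. A short induction gives $a^{n}=a^{n+k}x^{k}$ for all $k$, and taking $k=n$ a direct check shows that $e:=a^{n}x^{n}$ satisfies $e^{2}=a^{2n}x^{2n}=a^{n}x^{n}=e$. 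Since $a\in\mathfrak{p}$ we get $e\in\mathfrak{p}$; reducing modulo $\mathfrak{p}'$, where $a\equiv 1$, the relation $a^{n}=a^{2n}x^{n}$ forces $x^{n}\equiv 1$ and hence $e\equiv 1$, so $1-e\in\mathfrak{p}'$. Feeding this idempotent into Corollary \ref{prime ideals}(2) yields that $R/\mathfrak{p}\oplus R/\mathfrak{p}'$ is strongly CS.

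The main obstacle is precisely producing that separating idempotent: comaximality alone only supplies a non-idempotent decomposition $a+(1-a)=1$, and in general idempotents need not lift modulo $\mathfrak{p}\cap\mathfrak{p}'$, so zero-dimensionality is exactly what repairs the gap. An alternative to the computation above is to pass to $\overline{R}=R/Nil(R)$, which is von Neumann regular, write $a\overline{R}=\overline{e}\,\overline{R}$ for an idempotent $\overline{e}$ separating $\overline{\mathfrak{p}},\overline{\mathfrak{p}'}$, and lift $\overline{e}$ to an idempotent of $R$ through the nil ideal $Nil(R)$ via \cite[Proposition 27.1]{AF}; either route delivers the required idempotent. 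One could instead observe that a zero-dimensional commutative ring is clean and appeal to Theorem \ref{clean rings}, but the hands-on construction keeps the proof self-contained.
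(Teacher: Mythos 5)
Your proposal is correct, and for the substantive implication (3) $\Rightarrow$ (2) it takes a genuinely different route from the paper. The paper dispatches (3) $\Rightarrow$ (2) in two citations: a zero-dimensional ring is clean by \cite[Corollary 11]{AC}, and for clean rings $R/\mathfrak{m}\oplus R/\mathfrak{m}'$ is strongly CS for all distinct maximal ideals by Theorem \ref{clean rings}, which applies because zero-dimensionality makes $\mathfrak{p}$ and $\mathfrak{p}'$ maximal. You instead manufacture the separating idempotent by hand: comaximality gives $a\in\mathfrak{p}$ with $1-a\in\mathfrak{p}'$, the $\pi$-regularity characterization of zero-dimensional rings gives $a^{n}=a^{n+1}x$, and your computations (the induction to $a^{n}=a^{2n}x^{n}$, the verification that $e:=a^{n}x^{n}$ is idempotent, $e\in\mathfrak{p}$, and $e\equiv 1\pmod{\mathfrak{p}'}$) all check out; feeding $e$ into Corollary \ref{prime ideals}(2)((b) $\Rightarrow$ (a)) finishes. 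Your (1) $\Rightarrow$ (3) and (2) $\Rightarrow$ (1) coincide with the paper's arguments (contradiction with a nonmaximal prime via Corollary \ref{prime ideals}(1); Theorem \ref{WIN}), and your extra direct proof of (3) $\Rightarrow$ (1) is harmless redundancy. What your route buys: it is local to the given pair of primes rather than invoking the global clean-ring machinery, and it needs only the single pair $(\mathfrak{p},\mathfrak{p}')$, whereas Theorem \ref{clean rings} is a statement about all maximal ideals at once. The one external ingredient is the standard fact that zero-dimensional commutative rings are $\pi$-regular, which you do not prove but correctly flag and back up with an independent second derivation ($R/Nil(R)$ is von Neumann regular, lift the idempotent through the nil ideal via \cite[Proposition 27.1]{AF}, and use $Nil(R)\subseteq\mathfrak{p}\cap\mathfrak{p}'$). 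What the paper's route buys is brevity and reuse of its own Theorem \ref{clean rings}; you note this alternative yourself, so the difference is a deliberate and valid choice of a more self-contained construction.
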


\begin{proof} (1) $\Rightarrow$ (3) Suppose that $R$ has a nonmaximal prime ideal $\mathfrak{p}$ and let $\mathfrak{m}$
be a maximal ideal containing $\mathfrak{p}$.
By hypothesis, $R/\mathfrak{p}\oplus R/\mathfrak{m}$ is a weakly IN $R$-module and hence $\mathfrak{p}+\mathfrak{m}=R$
(see Corollary \ref{prime ideals}(1)). Thus $\mathfrak{m}=R$, which is a contradiction. Therefore $R$ is a zero-dimensional ring.

(3) $\Rightarrow$ (2) Let $\mathfrak{p}$ and $\mathfrak{p}'$ be two distinct prime ideals of $R$. Since $R$ is zero-dimensional, $\mathfrak{p}$ and $\mathfrak{p}'$ are maximal. Moreover, note that $R$ is a clean ring by \cite[Corollary 11]{AC}. So $R/\mathfrak{p}\oplus R/\mathfrak{p}'$
is a strongly CS $R$-module by Theorem \ref{clean rings}.

(2) $\Rightarrow$ (1) This follows from the fact that any strongly CS $R$-module is weakly IN (see Theorem \ref{WIN}).
\end{proof}

Recall that a ring $R$ is called an {\it mp-ring} if every prime ideal contains a unique minimal prime ideal;
equivalently, every maximal ideal of $R$ contains a unique minimal prime ideal.

Replacing the term ``prime" in Proposition \ref{0-dimensional-rings} by ``minimal prime", we obtain the following characterizations.

\begin{prop}\label{mp-rings} The following are equivalent for a ring $R$:
\begin{enumerate}
\item[{\em (1)}] $R/\mathfrak{p}\oplus R/\mathfrak{p}'$ is a weakly IN $R$-module for every distinct minimal prime ideals
$\mathfrak{p}$ and $\mathfrak{p}'$ of $R$;

\item[{\em (2)}] $\mathfrak{p}+\mathfrak{p}'=R$ for every distinct minimal prime ideals $\mathfrak{p}$ and $\mathfrak{p}'$ of $R$;

\item[{\em (3)}] $R$ is an mp-ring.
\end{enumerate}
\end{prop}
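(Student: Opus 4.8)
The plan is to mirror the structure of the proof of Proposition \ref{0-dimensional-rings}, separating the three equivalences into the trivial part (1) $\Leftrightarrow$ (2) and the substantive part (2) $\Leftrightarrow$ (3). For (1) $\Leftrightarrow$ (2), no work is needed beyond invoking Corollary \ref{prime ideals}(1): minimal prime ideals are in particular prime ideals, and that corollary says precisely that $R/\mathfrak{p}\oplus R/\mathfrak{p}'$ is weakly IN if and only if $\mathfrak{p}+\mathfrak{p}'=R$. Applying it to each pair of distinct minimal primes gives the equivalence at once.

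The heart of the proposition is (2) $\Leftrightarrow$ (3), and here I would exploit the elementary comaximality criterion: two ideals $I$ and $J$ satisfy $I+J=R$ if and only if no maximal ideal contains $I+J$, i.e. no maximal ideal contains both $I$ and $J$. I would use the ``every maximal ideal contains a unique minimal prime'' formulation of the mp-ring condition stated just before the proposition, since it is the one that pairs naturally with this criterion. For (3) $\Rightarrow$ (2), given distinct minimal primes $\mathfrak{p}$ and $\mathfrak{p}'$, I would argue by contradiction: if $\mathfrak{p}+\mathfrak{p}'\neq R$, then $\mathfrak{p}+\mathfrak{p}'$ lies in some maximal ideal $\mathfrak{m}$, so $\mathfrak{m}$ contains both $\mathfrak{p}$ and $\mathfrak{p}'$; as $R$ is an mp-ring, $\mathfrak{m}$ contains a unique minimal prime, forcing $\mathfrak{p}=\mathfrak{p}'$, a contradiction, so $\mathfrak{p}+\mathfrak{p}'=R$. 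For (2) $\Rightarrow$ (3), I would show contrapositively that if some maximal ideal $\mathfrak{m}$ contained two distinct minimal primes $\mathfrak{p}$ and $\mathfrak{p}'$, then $\mathfrak{p}+\mathfrak{p}'\subseteq\mathfrak{m}\neq R$, violating (2); hence each maximal ideal contains at most one minimal prime, and since every maximal ideal contains at least one minimal prime (every prime lies above a minimal prime, by Zorn's lemma applied downward in $Spec(R)$), $R$ is an mp-ring.

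The argument is entirely formal, so I do not expect a genuine obstacle; the proof is really a translation between the comaximality of minimal primes and the containment structure of maximal ideals. The only points that require care are to use the maximal-ideal form of the mp-ring definition rather than the arbitrary-prime form, and to keep straight that comaximality of all pairs of distinct minimal primes is exactly the statement that no maximal ideal swallows two of them.
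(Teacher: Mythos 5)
Your proposal is correct, and its skeleton coincides with the paper's: the equivalence (1) $\Leftrightarrow$ (2) is obtained in both cases by a direct appeal to Corollary \ref{prime ideals}(1), since minimal primes are primes. The difference lies entirely in (2) $\Leftrightarrow$ (3): the paper disposes of it with a citation to \cite[Theorem 6.2]{AT}, whereas you prove it from scratch via the comaximality criterion (two ideals are comaximal if and only if no maximal ideal contains both) together with the maximal-ideal formulation of the mp-condition recalled just before the proposition. Your two directions are sound, including the point that deserves the most care: in (2) $\Rightarrow$ (3) you correctly note that each maximal ideal contains \emph{at least} one minimal prime (every prime lies over a minimal prime, by Zorn's lemma applied to the set of primes below it under reverse inclusion, using that the intersection of a chain of primes is prime), so ruling out two distinct minimal primes below a maximal ideal really does yield uniqueness and hence the mp-property. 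What your version buys is self-containedness — the reader need not consult \cite{AT} — at the cost of a few lines; what the paper's version buys is brevity and an explicit pointer to where this characterization of mp-rings lives in the literature. There is no gap in your argument.
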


\begin{proof} (1)  $\Leftrightarrow$ (2) Use Corollary \ref{prime ideals}(1).

(2)  $\Leftrightarrow$ (3) see \cite[Theorem 6.2]{AT}.
\end{proof}

Following \cite[Definition 8.1]{AT}, a ring $R$ is said to be a {\it purified} ring if for every distinct minimal prime ideals $\mathfrak{p}$ and $\mathfrak{p}'$
of $R$, there exists an idempotent $e$ of $R$ such that $e\in \mathfrak{p}$ and $1-e\in \mathfrak{p}'$. Note that every purified ring is an mp-ring.

\begin{theor} \label{purified-characterization} For a ring $R$ the following are equivalent:

\noindent{\em (1)} $R/\mathfrak{p}\oplus R/\mathfrak{p}'$ is a strongly CS $R$-module for every distinct minimal prime ideals $\mathfrak{p}$ and $\mathfrak{p}'$ of $R$;

\noindent{\em (2)} $R$ is an mp-ring and idempotents lift modulo $\mathfrak{p}\cap \mathfrak{p}'$ for every distinct minimal prime ideals $\mathfrak{p}$ and $\mathfrak{p}'$ of $R$;

\noindent{\em (3)} $R$ is a purified ring.
\end{theor}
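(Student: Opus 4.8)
The plan is to obtain all three equivalences directly from the pairwise results already established, namely Corollary \ref{prime ideals}(2) and Proposition \ref{mp-rings}. The key observation is that both conditions (1) and (2) are phrased as universal statements ranging over all pairs of distinct minimal prime ideals, while each of the cited results describes exactly what happens for a single such pair. So I would first fix an arbitrary pair $\mathfrak{p}, \mathfrak{p}'$ of distinct minimal primes, apply the per-pair equivalences, and then quantify back over all pairs, using that a universally quantified conjunction distributes over the quantifier.

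For the equivalence (1) $\Leftrightarrow$ (3), I would invoke the equivalence (a) $\Leftrightarrow$ (b) of Corollary \ref{prime ideals}(2): since minimal primes are in particular prime ideals, $R/\mathfrak{p}\oplus R/\mathfrak{p}'$ is strongly CS if and only if there is an idempotent $e\in R$ with $e\in\mathfrak{p}$ and $1-e\in\mathfrak{p}'$. Requiring this for every pair of distinct minimal primes is, verbatim, the defining property of a purified ring stated just before the theorem. Hence (1) and (3) coincide.

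For the equivalence (1) $\Leftrightarrow$ (2), I would instead use (a) $\Leftrightarrow$ (c) of Corollary \ref{prime ideals}(2): for a fixed pair, $R/\mathfrak{p}\oplus R/\mathfrak{p}'$ is strongly CS if and only if $\mathfrak{p}+\mathfrak{p}'=R$ and idempotents lift modulo $\mathfrak{p}\cap\mathfrak{p}'$. Quantifying over all distinct minimal-prime pairs, condition (1) is therefore equivalent to the conjunction of ``$\mathfrak{p}+\mathfrak{p}'=R$ for all such pairs'' and ``idempotents lift modulo $\mathfrak{p}\cap\mathfrak{p}'$ for all such pairs''. By Proposition \ref{mp-rings} ((2) $\Leftrightarrow$ (3)), the first conjunct is precisely the statement that $R$ is an mp-ring, and the second conjunct is the remaining clause of (2). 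This yields (1) $\Leftrightarrow$ (2) and closes the cycle.

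Since each implication is a direct translation through already-proved statements, I do not expect a serious technical obstacle; the only point requiring a little care is the bookkeeping when passing from the per-pair equivalences to the global ones, that is, verifying that the universal quantifier over minimal-prime pairs may be distributed across the conjunction appearing in Corollary \ref{prime ideals}(2)(c). One should also record explicitly that minimal prime ideals qualify as the prime ideals to which Corollary \ref{prime ideals}(2) is applied.
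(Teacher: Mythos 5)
Your proposal is correct and follows essentially the same route as the paper, whose proof is exactly ``combine Corollary \ref{prime ideals}(2) with Proposition \ref{mp-rings}''; you have merely spelled out the per-pair application of (a)\,$\Leftrightarrow$\,(b) and (a)\,$\Leftrightarrow$\,(c) and the distribution of the universal quantifier, which the paper leaves implicit. No gaps.
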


\begin{proof} This follows by combining Corollary \ref{prime ideals}(2) with Proposition \ref{mp-rings}.
\end{proof}

\section{Modules over Dedekind Domains}

This short section is devoted to the description of the structure of both weakly IN and strongly CS modules over Dedekind domains. Recall that for an $R$-module $M$, $Ass(M)$ denotes the set of prime ideals of $R$ associated to $M$, that is, $Ass(M)=\{\mathfrak{p}\in Spec(R)$ $|$ $\mathfrak{p}=Ann_R(x)$ for some $0\neq x\in M\}$.

\begin{theor}\label{SSC over Dedekind domains} Let $R$ be a Dedekind domain with field of fractions $K$ and let $M$ be a nonzero $R$-module.
Then the following are equivalent:

\noindent{\em (1)} $M$ is a strongly CS $R$-module;

\noindent{\em (2)} $M$ is a uniform $R$-module;

\noindent{\em (3)} $M$ is isomorphic to an $R$-submodule of $K$ or there exists a maximal ideal $\mathfrak{p}$  of $R$ such that $M\cong E(R/\mathfrak{p})$
or $M\cong R/\mathfrak{p}^{n}$ for some positive integer $n$.
\end{theor}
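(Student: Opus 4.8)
The plan is to route everything through the classical theory of uniform modules and indecomposable injectives over a Dedekind domain, treating the equivalence (1)~$\Leftrightarrow$~(2) as essentially formal and concentrating the real work on (2)~$\Leftrightarrow$~(3).

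First I would dispose of (1)~$\Leftrightarrow$~(2). Since $R$ is a domain it has no nontrivial idempotents (if $e^2=e$ then $e(e-1)=0$ forces $e\in\{0,1\}$), so $R$ is indecomposable. By Remark~\ref{submodules of WIN modules}(3) a module over an indecomposable ring is strongly CS precisely when it is uniform: concretely, for $0\neq N\leq M$ one has $N\subseteq^{ess}eM$ for some idempotent $e$, and since $N\neq 0$ forces $eM\neq 0$ we get $e=1$, i.e. $N\subseteq^{ess}M$, so every nonzero submodule is essential. Thus (1) and (2) are equivalent and it remains only to identify the uniform $R$-modules.

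For (2)~$\Rightarrow$~(3) I would use that a nonzero module $M$ is uniform if and only if its injective hull $E(M)$ is indecomposable (uniformity is inherited by essential extensions and by submodules, and an injective module is indecomposable iff uniform), together with Matlis' classification of the indecomposable injectives over the Noetherian ring $R$: they are exactly the modules $E(R/\mathfrak{p})$, $\mathfrak{p}\in Spec(R)$. As $R$ is a Dedekind domain, $Spec(R)=\{0\}\cup Max(R)$ and $E(R)=K$, so either $E(M)\cong K$, whence $M$ embeds in $K$; or $E(M)\cong E(R/\mathfrak{p})$ for some maximal ideal $\mathfrak{p}$. In the latter case I would determine the submodule lattice of $E(R/\mathfrak{p})$ by localizing at $\mathfrak{p}$: since $R/\mathfrak{p}$ is $\mathfrak{p}$-primary, $E(R/\mathfrak{p})\cong E_{R_\mathfrak{p}}(R_\mathfrak{p}/\mathfrak{p}R_\mathfrak{p})$ is the Pr\"{u}fer-type module over the DVR $R_\mathfrak{p}$, whose submodules form the chain $0\subset\cdots\subset\{x:\mathfrak{p}^n x=0\}\subset\cdots$ with union $E(R/\mathfrak{p})$, each proper nonzero term being isomorphic to $R_\mathfrak{p}/\mathfrak{p}^n R_\mathfrak{p}\cong R/\mathfrak{p}^n$. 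Hence $M\cong E(R/\mathfrak{p})$ or $M\cong R/\mathfrak{p}^n$, which is (3).

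For the converse (3)~$\Rightarrow$~(2) I would simply verify uniformity of each listed module: $K$ is uniform (being indecomposable injective) and uniformity passes to submodules, so every $R$-submodule of $K$ is uniform; $E(R/\mathfrak{p})$ is uniform as an indecomposable injective; and $R/\mathfrak{p}^n$ is uniserial, its submodules being the $\mathfrak{p}^k/\mathfrak{p}^n$, hence uniform. I expect the main obstacle to be the second case of (2)~$\Rightarrow$~(3), namely pinning down that every proper nonzero submodule of $E(R/\mathfrak{p})$ is isomorphic to $R/\mathfrak{p}^n$; the cleanest route is the localization reduction to a DVR, where the submodule chain of the Pr\"{u}fer module is standard, transporting the isomorphisms back via $(R/\mathfrak{p}^n)_\mathfrak{p}=R/\mathfrak{p}^n$, which holds because $\mathfrak{p}$ is the unique prime containing $\mathfrak{p}^n$.
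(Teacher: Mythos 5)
Your proposal is correct, and for the key implication it takes a genuinely different route from the paper. The equivalence (1)~$\Leftrightarrow$~(2) and the direction (3)~$\Rightarrow$~(2) are handled exactly as in the paper (indecomposability of $R$ via Remark~\ref{submodules of WIN modules}(3); uniformity of $K$, $E(R/\mathfrak{p})$, and the uniserial module $R/\mathfrak{p}^n$), and both proofs of (2)~$\Rightarrow$~(3) start the same way, reducing to $E(M)\cong E(R/\mathfrak{p})$ via the Matlis-type classification of indecomposable injectives over a Noetherian ring (the paper cites Sharp--V\'amos for this). The divergence is in the torsion case $\mathfrak{p}\neq 0$: the paper observes that $M$ is indecomposable (being uniform) and invokes Kaplansky's classification of indecomposable torsion modules over a Dedekind ring, then uses $Ass(M)=\{\mathfrak{p}\}$ to force the cyclic case to be $\mathfrak{p}$-primary; you instead compute the entire submodule lattice of $E(R/\mathfrak{p})$ directly, via the identification $E_R(R/\mathfrak{p})\cong E_{R_\mathfrak{p}}(R_\mathfrak{p}/\mathfrak{p}R_\mathfrak{p})$ and the standard chain of submodules $\{x \mid \mathfrak{p}^n x=0\}\cong R/\mathfrak{p}^n$ of the Pr\"ufer module over the DVR $R_\mathfrak{p}$, with the isomorphisms transported back via $(R/\mathfrak{p}^n)_\mathfrak{p}\cong R/\mathfrak{p}^n$. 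Your route is more self-contained and elementary (it avoids the classification of indecomposable modules entirely and makes the $Ass(M)=\{\mathfrak{p}\}$ bookkeeping unnecessary, since every submodule of $E(R/\mathfrak{p})$ is visibly $\mathfrak{p}$-primary) and it yields strictly more information, namely the full submodule lattice of $E(R/\mathfrak{p})$; the paper's route is shorter on the page, at the cost of a heavier citation. One point you should make explicit in a final write-up is why every $R$-submodule of $E(R/\mathfrak{p})$ is an $R_\mathfrak{p}$-submodule (each element is killed by some $\mathfrak{p}^n$, and $R\to R_\mathfrak{p}/\mathfrak{p}^nR_\mathfrak{p}$ is surjective, so the $R$- and $R_\mathfrak{p}$-generated submodules of any element coincide) --- you gesture at this with the remark that $\mathfrak{p}$ is the unique prime containing $\mathfrak{p}^n$, and it is the only step of your plan that is asserted rather than argued.
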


\begin{proof} (1) $\Leftrightarrow$ (2) This follows from the fact that $R$ is indecomposable (see Remark \ref{submodules of WIN modules}(3)).

(2) $\Rightarrow$ (3) Let $M$ be a nonzero uniform $R$-module.
Since $R$ is noetherian, $E(M)\cong E(R/\mathfrak{p})$ for some prime ideal $\mathfrak{p}$ of $R$ (see \cite[Corollary of Theorem 2.32]{Sharp and Vamos:1971}).   Hence $M$ is isomorphic to a submodule of $E(R/\mathfrak{p})$ and $Ass(M)=\{\mathfrak{p}\}$. If $\mathfrak{p}=0$, then $M$ is isomorphic to an $R$-submodule of $E(R)\cong K$.
Now assume that $\mathfrak{p}\neq 0$. Then $\mathfrak{p}$ is a maximal ideal of $R$ as $R$ is a Dedekind domain.
Clearly, $E=E(R/\mathfrak{p})$ is a torsion $R$-module.
Moreover, $M$ is indecomposable since $E$ is uniform. Using \cite[Theorem 10]{Kaplansky},
we infer that $M\cong E(R/\mathfrak{p})$ or $M\cong R/\mathfrak{q}^n$ for some maximal ideal $\mathfrak{q}$ of $R$ and some positive integer $n$.
In the latter case we have $\mathfrak{q} \in Ass(M)$ and consequently $\mathfrak{q}=\mathfrak{p}$.

(3) $\Rightarrow$ (2) Let $\mathfrak{p}$ be a nonzero prime ideal of $R$.
It is clear that $K$ and $E(R/\mathfrak{p})$ are uniform $R$-modules.
Also, $R/\mathfrak{p}^n$ is uniform since it is a uniserial $R$-module by \cite[Lemma 6.8]{Sharp and Vamos:1971}.
\end{proof}

Let $R$ be a Dedekind domain with field of fractions $K$. Recall that an $R$-submodule $F$ of $K$ is called a fractional ideal of $R$
if there is a nonzero element $r$ of $R$ such that $rF \subseteq R$. Note that every finitely generated $R$-submodule of $K$ is a fractional
ideal of $R$ (see \cite[Lemma 6.15]{Sharp and Vamos:1971}). It is well known that every injective $R$-module has no maximal submodules.
The following corollary is an immediate consequence of the preceding theorem.

\begin{cor} \label{f.g. SSC over Dedekind domains} Let $R$ be a Dedekind domain and let $M$ be a nonzero finitely generated $R$-module.
Then the following are equivalent:

\noindent{\em (1)} $M$ is strongly CS;

\noindent{\em (2)} $M\cong I$ where $I$ is a nonzero fractional ideal of $R$ or there exists a maximal ideal $\mathfrak{p}$ of $R$ such that
$M\cong R/\mathfrak{p}^{n}$ for some positive integer $n$.
\end{cor}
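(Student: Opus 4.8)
The plan is to read off the corollary directly from Theorem \ref{SSC over Dedekind domains} by imposing finite generation on each of the three possibilities listed in part (3) of that theorem. First I would invoke the theorem: since $M\neq 0$, $M$ is strongly CS if and only if $M$ is isomorphic to an $R$-submodule of $K$, or $M\cong E(R/\mathfrak{p})$ for some maximal ideal $\mathfrak{p}$, or $M\cong R/\mathfrak{p}^n$ for some maximal ideal $\mathfrak{p}$ and positive integer $n$. The work is then to see how the hypothesis that $M$ is finitely generated collapses these three cases into the two stated in the corollary.

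For the forward implication, suppose $M$ is strongly CS and finitely generated, and treat the three cases. If $M$ is isomorphic to an $R$-submodule of $K$, then since $M$ is finitely generated it is isomorphic to a finitely generated $R$-submodule of $K$, which is a fractional ideal of $R$ by \cite[Lemma 6.15]{Sharp and Vamos:1971}; as $M\neq 0$, this fractional ideal is nonzero, giving the first alternative of (2). If $M\cong R/\mathfrak{p}^n$, we are already in the second alternative of (2), with nothing to prove.

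The hard part---really the only point with any content---is excluding the middle case $M\cong E(R/\mathfrak{p})$. Here I would argue that $E(R/\mathfrak{p})$ cannot be finitely generated. The key observation, recorded just before the corollary, is that every injective $R$-module has no maximal submodules. On the other hand, a nonzero finitely generated module over any ring with identity always has a maximal submodule (every proper submodule is contained in a maximal one, by a Zorn's lemma argument using finite generation, and $0$ is proper since $M\neq 0$). These two facts are incompatible, so a finitely generated $M$ cannot be isomorphic to $E(R/\mathfrak{p})$, and this case is vacuous.

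For the converse, I would simply check that each module in (2) is strongly CS by feeding it back into Theorem \ref{SSC over Dedekind domains}. A nonzero fractional ideal $I$ is by definition a nonzero $R$-submodule of $K$, hence strongly CS by the implication (3) $\Rightarrow$ (1) of that theorem; and $R/\mathfrak{p}^n$ appears verbatim in part (3), so it is strongly CS as well. This closes the equivalence.
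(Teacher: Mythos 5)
Your proposal is correct and follows exactly the paper's intended route: the paper also derives the corollary directly from Theorem \ref{SSC over Dedekind domains}, using \cite[Lemma 6.15]{Sharp and Vamos:1971} to identify finitely generated submodules of $K$ with fractional ideals, and excluding $E(R/\mathfrak{p})$ via the remark that injective modules have no maximal submodules while nonzero finitely generated modules do. The only difference is that you spell out the Zorn's lemma argument the paper leaves implicit.
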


\begin{theor}\label{WIN over Dedekind domains} Let $R$ be a Dedekind domain with field of fractions $K$ and let $M$ be a nonzero $R$-module.
Then the following are equivalent:

\noindent{\em (1)} $M$ is a weakly IN $R$-module;

\noindent{\em (2)} $M$ is isomorphic to an $R$-submodule of $K$ or $M\cong E(R/\mathfrak{p})$ for some maximal ideal $\mathfrak{p}$ of $R$
or $M\cong R/\mathfrak{p}_1^{n_1}\oplus \cdots \oplus R/\mathfrak{p}_k^{n_k}$ for some positive integers $n_i$ $(1 \leq i \leq k)$
and distinct maximal ideals $\mathfrak{p}_1,\dots , \mathfrak{p}_k$ of $R$.
\end{theor}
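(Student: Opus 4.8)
The plan is to take $(2)\Rightarrow(1)$ as the routine direction and to devote the effort to $(1)\Rightarrow(2)$ via a structural decomposition of $M$. For the converse, each listed module is handled by the tools already in place: a submodule of $K$ and $E(R/\mathfrak p)$ are uniform (as $R$ is a domain), hence strongly CS by Theorem \ref{SSC over Dedekind domains} and therefore weakly IN by Theorem \ref{WIN}; and for $M\cong R/\mathfrak p_1^{n_1}\oplus\cdots\oplus R/\mathfrak p_k^{n_k}$ each summand is uniserial, hence weakly IN, while $Ann_R(R/\mathfrak p_i^{n_i})=\mathfrak p_i^{n_i}$ and distinct maximal ideals have comaximal powers, so $Ann_R(M_i)+Ann_R(M_j)=R$ for $i\neq j$ and Theorem \ref{weakly IN direct sum} applies.

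For $(1)\Rightarrow(2)$ the first step is to show that a weakly IN module over $R$ is either torsion or torsion-free. If $M$ contained a nonzero torsion element $t$ and a non-torsion element $x$, then $Rt\cap Rx=0$ (since over a domain any nonzero multiple $sx$ has $Ann_R(sx)=0$ and so cannot equal the torsion element $rt$ unless both vanish), and the definition of weakly IN would give $Ann_R(t)+Ann_R(x)=Ann_R(t)+0=R$, contradicting $t\neq0$. In the torsion-free case, if $rank(M)\geq 2$ then $M$ contains a copy of $R\oplus R$, which would be weakly IN by Proposition \ref{closed-submodules}, contradicting Proposition \ref{free weakly IN}; hence $rank(M)=1$ and $M$ embeds in $M\otimes_R K\cong K$, yielding a submodule of $K$.

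The substantive case is $M$ torsion, where I would invoke the primary decomposition $M=\bigoplus_{\mathfrak p\in Max(R)}M_{(\mathfrak p)}$. Each component is a submodule of $M$, hence weakly IN by Proposition \ref{closed-submodules}, and a short annihilator computation shows $Ann_R(M_{(\mathfrak p)})$ equals $\mathfrak p^{n}$ (if the $\mathfrak p$-exponents are bounded) or $0$ (via Krull's intersection theorem otherwise). Since $R/\mathfrak p^{n}$ is local and $R$ itself is a domain, idempotents lift modulo $Ann_R(M_{(\mathfrak p)})$ in either case, so Theorem \ref{WIN} upgrades each $M_{(\mathfrak p)}$ to strongly CS and hence uniform; by Theorem \ref{SSC over Dedekind domains} this forces $M_{(\mathfrak p)}\cong R/\mathfrak p^{n}$ or $M_{(\mathfrak p)}\cong E(R/\mathfrak p)$. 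If some component is $E(R/\mathfrak p)$, then writing $M=E(R/\mathfrak p)\oplus N$ and applying weak-INness to these trivially intersecting submodules gives $0+Ann_R(N)=R$, so $N=0$ and $M\cong E(R/\mathfrak p)$. Otherwise every component is a cyclic $R/\mathfrak p^{n}$; were there infinitely many, $Ann_R(M)=0$ would make $M$ faithful, hence strongly CS by Corollary \ref{Ann(R) direct summand}(2) and thus uniform, which is impossible for a sum of at least two nonzero summands. So finitely many components survive and $M\cong R/\mathfrak p_1^{n_1}\oplus\cdots\oplus R/\mathfrak p_k^{n_k}$.

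The main obstacle is precisely this torsion analysis. The delicate point is recognizing that weak-INness of an individual primary component, together with the automatic lifting of idempotents modulo $\mathfrak p^n$ (forced by the locality of $R/\mathfrak p^n$), is exactly what promotes the component to a uniform module; the $E(R/\mathfrak p)$ case must then be shown to swallow all other components, and the cyclic case must be bounded in number through the faithfulness criterion. I expect the annihilator bookkeeping—establishing $Ann_R(M_{(\mathfrak p)})\in\{\mathfrak p^n\}\cup\{0\}$ and $Ann_R(M)=0$ when infinitely many primes occur—to be the only place requiring genuine care.
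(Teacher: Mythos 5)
Your proposal is correct, but in the direction (1) $\Rightarrow$ (2) it takes a genuinely different route from the paper. The paper splits on whether $Ann_R(M)$ is zero or not: when $M$ is faithful, Corollary \ref{Ann(R) direct summand}(2) upgrades weakly IN to strongly CS and Theorem \ref{SSC over Dedekind domains} finishes at once (covering both the submodules of $K$ and $E(R/\mathfrak{p})$); when $Ann_R(M)\neq 0$, it invokes the structure theorem for bounded modules over a Dedekind domain (\cite[Theorem 6.11]{Sharp and Vamos:1971}), which directly writes $M=\oplus_{i\in I}R/\mathfrak{p}_i^{n_i}$ with only finitely many distinct primes, after which the weakly IN condition forces the primes to be pairwise distinct and hence $I$ finite. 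You instead split torsion versus torsion-free (your dichotomy argument from weak-INness, via $Rt\cap Rx=0$ and $Ann_R(x)=0$, is correct and does not appear in the paper), dispose of the torsion-free case through the $R\oplus R$ obstruction of Propositions \ref{closed-submodules} and \ref{free weakly IN}, and in the torsion case use the primary decomposition and work inside each $\mathfrak{p}$-component: your observation that idempotents lift automatically modulo $Ann_R(M_{(\mathfrak{p})})\in\{\mathfrak{p}^n,0\}$ (local quotient, respectively trivially), so that Theorem \ref{WIN} makes each component strongly CS and hence uniform, is the key substitute for the bounded-module theorem, and your $E(R/\mathfrak{p})$-swallowing step together with the finiteness-via-faithfulness argument (Corollary \ref{Ann(R) direct summand}(2) again, since $\bigcap_i\mathfrak{p}_i^{n_i}=0$ for infinitely many distinct primes) correctly replaces the paper's appeal to the finitely-many-distinct-primes clause of the cited theorem. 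What each approach buys: the paper's proof is shorter because \cite[Theorem 6.11]{Sharp and Vamos:1971} does all the heavy lifting in the unfaithful case; yours needs only the more elementary primary decomposition of torsion modules plus the already-proved Theorem \ref{SSC over Dedekind domains}, at the cost of extra annihilator bookkeeping ($Ann_R(M_{(\mathfrak{p})})$, $Ann_R(E(R/\mathfrak{p}))=0$, Krull intersection), all of which you carry out or indicate correctly; your (2) $\Rightarrow$ (1) matches the paper's.
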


\begin{proof} (1) $\Rightarrow$ (2) Let $M$ be a nonzero weakly IN $R$-module. If $Ann_R(M)=0$, then $M$ is strongly CS
by Corollary \ref{Ann(R) direct summand}. From Theorem \ref{SSC over Dedekind domains}, it follows that $M$ is isomorphic to an $R$-submodule of $K$
or $M\cong E(R/\mathfrak{p})$ for some maximal ideal $\mathfrak{p}$ of $R$. Now suppose that $Ann_R(M)\neq 0$.
By \cite[Theorem 6.11]{Sharp and Vamos:1971}, there exists a family $\{\mathfrak{p}_i, n_i\}_{i\in I}$ such that $\mathfrak{p}_i$ are maximal ideals of $R$
and there are only finitely many distinct ones, $n_i$ $(i \in I)$ are positive integers and $M=\oplus _{i\in I}M_i$ where $M_i\cong R/\mathfrak{p}_i^{n_i}$ for every $i\in I$. Let $j\neq k\in I$. Since $M_j\cap M_k=0$ and $M$ is weakly IN, we have $Ann_R(M_j)+Ann_R(M_k)=R$. Thus, $\mathfrak{p}_j^{n_j}+\mathfrak{p}_k^{n_k}=R$. So $\mathfrak{p}_j\neq \mathfrak{p}_k$ for all $j\neq k$ in $I$. Consequently, $I$ is a finite set.

(2) $\Rightarrow$ (1) This follows from Theorems \ref{weakly IN direct sum} and \ref{SSC over Dedekind domains}.
\end{proof}

\begin{rem} \label{cyclic-weakly-IN} \rm Let $R$ be a Dedekind domain and let $I$ be a nonzero ideal of $R$. By \cite[Lemma 6.12 and Theorem 6.14]{Sharp and Vamos:1971},
$I=\mathfrak{p}_1^{n_1}\mathfrak{p}_2^{n_2}\dots \mathfrak{p}_k^{n_k}=\mathfrak{p}_1^{n_1}\cap \mathfrak{p}_2^{n_2}\cap \cdots \cap \mathfrak{p}_k^{n_k}$
for some distinct maximal ideals $\mathfrak{p}_1,\dots , \mathfrak{p}_k$ of $R$ and some positive integers $n_i$ $(1 \leq i \leq k)$.
By the Chinese Remainder Theorem, we have $R/I\cong R/\mathfrak{p}_1^{n_1}\oplus \cdots \oplus R/\mathfrak{p}_k^{n_k}$.
Now using Theorem \ref{WIN over Dedekind domains}, we conclude that every cyclic $R$-module is weakly IN (see also Theorem \ref{cyclics are weakly IN}).
\end{rem}

Combining Theorem \ref{WIN over Dedekind domains} and Remark \ref{cyclic-weakly-IN}, we obtain the following corollary.

\begin{cor} \label{f.g. WIN over Dedekind domains} Let $R$ be a Dedekind domain and let $M$ be a nonzero finitely generated $R$-module. Then the following are equivalent:

\noindent{\em (1)} $M$ is weakly IN;

\noindent{\em (2)} $M$ is cyclic or $M\cong I$ where $I$ is a nonzero fractional ideal of $R$.
\end{cor}

\section{CS Trivial Extensions}

Let $R$ be a ring and let $M$ be an $R$-module. The abelian group $R\oplus M$ can be endowed with the following product: $(a,x)(b,y)=(ab,ay+bx)$. The result is a ring called the trivial extension of $R$ by $M$ denoted by $R\varpropto M$. $R$ becomes a subring of $R\varpropto M$ and $M$ an ideal such that $M^2=0$.
If $I$ is an ideal of $R$ and $N$ is a submodule of $M$ such that $IM\subseteq N$, then $(I,N)=\{(a,x)\in R\varpropto M \mid a\in I,\ x\in N\}$ is an ideal of $A=R\varpropto M$ and we have $Ann_A(I,N)=(Ann_R(I)\cap Ann_R(N),Ann_M(I)$).
In this section our main result is a characterization of CS trivial extensions. To prove it, we need the following three lemmas.

\begin{lem}\label{T-Ext-M-weakly-IN} Let $R$ be a ring and let $M$ be an $R$-module such that $A=R\propto M$ is a CS ring.
Then $M$ is a weakly IN $R$-module.
\end{lem}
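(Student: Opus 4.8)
The plan is to exploit the characterization of CS rings in Proposition \ref{CS rings}(3), namely that $A$ is CS if and only if $Ann_A(\mathcal{I}) + Ann_A(\mathcal{J}) = A$ for every pair of ideals $\mathcal{I}, \mathcal{J}$ of $A$ with $\mathcal{I} \cap \mathcal{J} = 0$. To verify that $M$ is weakly IN, I start from two arbitrary submodules $N$ and $L$ of $M$ with $N \cap L = 0$ and manufacture from them a pair of ideals of $A$ having zero intersection.

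First I would pass from $N$ and $L$ to the subsets $(0, N)$ and $(0, L)$ of $A$ and check that these are ideals. This rests on $M^2 = 0$: for $(a,x)\in A$ and $n\in N$ one has $(a,x)(0,n) = (0, an)$, and $an \in N$ because $N$ is an $R$-submodule, so $(0,N)$ is closed under multiplication by $A$; the same holds for $(0,L)$. Moreover $(0, N) \cap (0, L) = (0, N \cap L) = 0$, so the two ideals have zero intersection. Applying Proposition \ref{CS rings}(3) then yields $Ann_A(0, N) + Ann_A(0, L) = A$.

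The next step is to read off what this identity says at the level of $R$. Using the annihilator formula recorded just before the lemma, $Ann_A(I, N') = (Ann_R(I) \cap Ann_R(N'), Ann_M(I))$ (whose defining hypothesis $IM \subseteq N'$ holds trivially when $I = 0$), I compute $Ann_A(0, N) = (Ann_R(0) \cap Ann_R(N), Ann_M(0)) = (Ann_R(N), M)$, since $Ann_R(0) = R$ and $Ann_M(0) = M$; likewise $Ann_A(0, L) = (Ann_R(L), M)$. Hence $Ann_A(0,N) + Ann_A(0,L) = (Ann_R(N) + Ann_R(L),\, M)$, and this equals $A = (R, M)$ precisely when $Ann_R(N) + Ann_R(L) = R$. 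Comparing first coordinates gives $Ann_R(N) + Ann_R(L) = R$, which is exactly the weakly IN condition for $M$.

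There is no serious obstacle here; the argument is essentially the computation already sketched in the introduction. The only points demanding care are the bookkeeping in the trivial extension: confirming that $(0, N)$ really is an ideal (which relies on $M^2 = 0$), checking that a zero intersection of submodules of $M$ transfers to a zero intersection of the associated ideals of $A$, and applying the annihilator formula with $I = 0$ so that the first coordinate collapses to $Ann_R(N)$ and the second to $M$. Once these are in place, the conclusion follows by projecting the resulting equality of ideals of $A$ onto its $R$-component.
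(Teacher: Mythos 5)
Your proposal is correct and follows exactly the paper's own proof: form the ideals $(0,N)$ and $(0,L)$ of $A$, note their intersection is zero, apply Proposition \ref{CS rings}(3), and project the resulting equality $(\Ann_R(N),M)+(\Ann_R(L),M)=A$ onto the first coordinate. The extra bookkeeping you supply (verifying the ideal property via $M^2=0$ and applying the annihilator formula with $I=0$) is precisely what the paper leaves implicit.
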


\begin{proof} Let $N$ and $L$ be two submodules of  $M$ such that $N\cap L=0$.
Then $(0,N)$ and $(0,L)$ are two ideals of $A$ satisfying $(0,N)\cap (0,L)=0$. By Proposition \ref{CS rings}, $Ann_A(0,N)+Ann_A(0,L)=A$.
It follows that $(Ann_R(N),M)+(Ann_R(L),M)=A$ and hence $Ann_R(N)+Ann_R(L)=R$. Therefore $M$ is weakly IN.
\end{proof}

\begin{lem}\label{faithful CS TX} Let $R$ be a ring and let $M$ be a faithful $R$-module. Then the following are equivalent:

\noindent{\em (1)} $A=R\propto M$ is a CS ring;

\noindent{\em (2)} $M$ is a weakly IN $R$-module;

\noindent{\em (3)} $M$ is a strongly CS $R$-module.
\end{lem}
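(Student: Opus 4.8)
The plan is to close the cycle (1) $\Rightarrow$ (2) $\Leftrightarrow$ (3) $\Rightarrow$ (1). The equivalence (2) $\Leftrightarrow$ (3) is immediate from Corollary \ref{Ann(R) direct summand}: since $M$ is faithful we have $Ann_R(M)=0$, which is a direct summand of $R$, so a faithful module is weakly IN exactly when it is strongly CS. The implication (1) $\Rightarrow$ (2) is precisely Lemma \ref{T-Ext-M-weakly-IN}. Thus the whole content of the lemma is the implication (3) $\Rightarrow$ (1): if $M$ is a faithful strongly CS module then $A=R\propto M$ is a CS ring. I would establish this through the characterization of CS rings in Proposition \ref{CS rings}(2), namely that every ideal of $A$ must be essential in $(e,0)A$ for some idempotent $(e,0)$ of $A$.

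So let $\mathcal{I}$ be an ideal of $A$ and put $N=\{x\in M\mid (0,x)\in \mathcal{I}\}$, the ``zero level'' submodule of $M$ attached to $\mathcal{I}$. Because $M$ is strongly CS, there is an idempotent $e=e^2\in R$ with $N\subseteq^{ess}eM$; note $(e,0)$ is then an idempotent of $A$ and $(e,0)A=(eR,eM)$. First I would record the inclusion $\mathcal{I}\subseteq (e,0)A$. Writing $I=\{a\in R\mid (a,x)\in \mathcal{I}\text{ for some }x\}$, multiplying $(a,x)\in \mathcal{I}$ by $(0,m)$ shows $aM\subseteq N$, so $IM\subseteq N\subseteq eM$; faithfulness of $M$ then forces $(1-e)a=0$, i.e. $I\subseteq eR$. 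Consequently, for $(a,x)\in \mathcal{I}$ we have $(1-e)(a,x)=(0,(1-e)x)\in \mathcal{I}$, whence $(1-e)x\in N\subseteq eM$; as $(1-e)x\in (1-e)M$ and $eM\cap (1-e)M=0$, we get $(1-e)x=0$ and so $x\in eM$. Hence $\mathcal{I}\subseteq (eR,eM)=(e,0)A$.

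It remains to prove $\mathcal{I}$ is essential in $(e,0)A$, for which it suffices to show $A(b,y)\cap \mathcal{I}\neq 0$ for every nonzero $(b,y)\in (e,0)A$ (so $b\in eR$, $y\in eM$). If $b\neq 0$ then $bM\neq 0$ by faithfulness and $bM\subseteq eM$, so $N\cap bM\neq 0$; choosing $0\neq w=bm_0\in N\cap bM$ gives $(0,m_0)(b,y)=(0,bm_0)=(0,w)\in \mathcal{I}$, a nonzero element of $A(b,y)\cap \mathcal{I}$. If $b=0$ then $y\neq 0$ and $Ry$ is a nonzero submodule of $eM$, so $Ry\cap N\neq 0$; a nonzero $ry\in N$ yields $(r,0)(0,y)=(0,ry)\in \mathcal{I}$. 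In either case $A(b,y)$ meets $\mathcal{I}$ nontrivially, so $\mathcal{I}\subseteq^{ess}(e,0)A$ and $A$ is a CS ring by Proposition \ref{CS rings}(2).

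The step I expect to be the main obstacle is controlling the ``graph'' (connecting-map) structure of an arbitrary ideal $\mathcal{I}$ of $A$: the second coordinates occurring in $\mathcal{I}$ form a submodule $P\supseteq N$ on which $e$ need not act as the identity, and the first coordinate $a$ of $(a,x)\in\mathcal{I}$ is genuinely coupled to $x$, so a naive attempt to split off an idempotent in the annihilator formulation runs into this coupling. The decisive point that dissolves the difficulty is to extract the idempotent from the \emph{zero-level} submodule $N$ rather than from $P$, together with the observation that left multiplication by $(0,m_0)$ sends $(b,y)$ to $(0,bm_0)$, collapsing any element of $(e,0)A$ into the ideal $(0,M)$, where essentiality is governed precisely by $N\subseteq^{ess}eM$. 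Once this routing is set up, faithfulness does the rest, and notably no separate hypothesis that $R$ itself be a CS ring is required.
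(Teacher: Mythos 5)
Your proof is correct and follows essentially the same route as the paper: the same reduction of (1)$\Rightarrow$(2) to Lemma \ref{T-Ext-M-weakly-IN} and (2)$\Leftrightarrow$(3) to Corollary \ref{Ann(R) direct summand}, and for (3)$\Rightarrow$(1) the identical construction (your zero-level submodule $N$ is the paper's $V$, with $N\subseteq^{ess}eM$, the inclusion $\mathcal{I}\subseteq(eR,eM)$ via faithfulness, and the same case split on the first coordinate for essentiality). The only cosmetic differences are that you conclude $(1-e)x=0$ from $eM\cap(1-e)M=0$ where the paper writes $x-ex\in eM$, and you multiply by $(0,m_0)$ where the paper multiplies by $(0,ty)$.
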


\begin{proof} (1) $\Rightarrow$ (2) See Lemma \ref{T-Ext-M-weakly-IN}.

(2) $\Leftrightarrow$ (3) This follows from Corollary \ref{Ann(R) direct summand}.

(3) $\Rightarrow$ (1) Let $I$ be an ideal of $A$ and let $V=\{x\in M\ |\ (0,x)\in I\}$.  Then $V$ is a submodule of $M$ and $I\cap (0,M)=(0,V)$.
Since $M$ is strongly CS, there exists $e=e^2\in R$ such that $V\subseteq ^{ess}eM$. First let us show that $I\subseteq (e,0)A=(eR,eM)$.
Consider an element $(a,x)$ of $I$. Then, for any element $z$ of $M$, $(a,x)(0,z)=(0,az)\in I\cap (0,M)=(0,V)$.
Thus $az\in V$ and hence $aM\subseteq V\subseteq eM$. Consequently, $(1-e)aM=0$. But $M$ is a faithful $R$-module, so $(1-e)a=0$ and hence $a=ea\in eR$. Moreover, $(a,x)(1-e,0)=(a(1-e),(1-e)x)=(0,(1-e)x)\in I\cap (0,M)=(0,V)$. This gives $(1-e)x\in V\subseteq eM$. Thus $x-ex\in eM$ and so $x\in eM$.
Therefore $I\subseteq (eR,eM)$. Now let us show that $I\subseteq ^{ess} (eR,eM)$. Let $(0, 0)\neq (ea,ex)\in (eR,eM)$, where $a\in R$ and $x\in M$.
If $ea\neq 0$, then $eaM\neq 0$ since $M$ is faithful. So there exists $y\in M$ such that $eay\neq 0$. But $0\neq eay\in eM$ and  $V\subseteq ^{ess}eM$,
so there exists $t\in R$ such that $0\neq teay \in V$. Hence $(0, 0)\neq (0,ty)(ea,ex)=(0,teay)\in (0,V)=I\cap (0,M)$. Therefore $(0, 0)\neq (0,ty)(ea,ex)\in I$. Now suppose that $ea=0$. Then $ex\neq 0$. Since $V\subseteq ^{ess}eM$, there exists $u\in R$ such that $0\neq uex \in V$.
So $(0, 0)\neq (u,0)(ea,ex)=(0,uex)\in (0,V)=I\cap (0,M)$. Hence  $0\neq (u,0)(ea,ex)\in I$. It follows that $I\subseteq ^{ess} (e,0)A$.
Note that $(e,0)^2=(e,0)$. Consequently, $A$ is a CS ring.
\end{proof}

\begin{lem}\label{product of weakly IN rings} Let $R$ be a ring and let $M$ be a nonzero $R$-module. Let $e^2=e\in R$.
Then the following hold true:

\noindent{\em (1)} $eR$ is strongly CS as $R$-module if and only if $eR$ is a CS ring.

\noindent{\em (2)} Assume that $Ann_R(M)=eR$. Then $(1-e)M$ is a faithful $(1-e)R$-module.
Moreover, the rings $A=R\propto M$ and $eR\times ((1-e)R\propto (1-e)M)$ are isomorphic.
\end{lem}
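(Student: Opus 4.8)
The plan is to treat the two parts separately, the first by comparing the module structures on $eR$ and the second by exploiting a central idempotent of $A$.

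For (1), the key observation is that the ring $eR$ (with identity $e$), regarded as an $R$-module, carries exactly the same structure as $eR$ regarded as a module over itself. Indeed, for any $N\subseteq eR$ one has $eN=N$, so the $R$-submodules of $eR$ coincide with the $eR$-submodules of $eR$, and the essential-submodule relation is identical in both settings. It then remains to match the direct summands cut out by idempotents: if $f=f^2\in R$ then $feR=(ef)(eR)$ with $ef$ an idempotent of the ring $eR$, and conversely every idempotent $g$ of $eR$ is an idempotent of $R$ with $g(eR)=g\,eR$. Hence the two families $\{feR : f=f^2\in R\}$ and $\{g(eR) : g=g^2\in eR\}$ coincide, so $eR$ is strongly CS as an $R$-module precisely when it is strongly CS as an $eR$-module. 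Applying Remark \ref{submodules of WIN modules}(1) to the ring $eR$, the latter is equivalent to $eR$ being a CS ring, which is the claim.

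For (2), I first note that $Ann_R(M)=eR$ forces $eM=0$, hence $M=(1-e)M$. Faithfulness of $(1-e)M$ over $(1-e)R$ is then immediate: since $M=(1-e)M$, an element $(1-e)r$ of $(1-e)R$ annihilating $(1-e)M$ annihilates $M$, whence $(1-e)r\in Ann_R(M)\cap(1-e)R=eR\cap(1-e)R=0$. For the ring isomorphism I would use the central idempotent $(e,0)$ of $A=R\propto M$, which yields the ring decomposition $A\cong (e,0)A\times(1-e,0)A$. Using $eM=0$ one identifies $(e,0)A=\{(er,0):r\in R\}\cong eR$, while $(1-e,0)A=\{((1-e)r,(1-e)m):r\in R,\ m\in M\}$ carries exactly the trivial-extension multiplication of $(1-e)R\propto(1-e)M$, with identity $(1-e,0)$. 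Assembling these two identifications gives the desired isomorphism.

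The bulk of the argument is routine bookkeeping; the one point that needs genuine care is the verification in (1) that the two families of idempotent-generated summands really do coincide, since the notion \emph{strongly CS} is phrased in terms of idempotents of the ambient ring, and one must check that the property does not depend on whether that ring is taken to be $R$ or $eR$. Once this correspondence is in place, both parts follow by direct computation.
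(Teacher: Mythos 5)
Your proposal is correct and takes essentially the same approach as the paper: part (1) is the same correspondence between $R$-submodules/idempotent summands of $eR$ and ideals/idempotents of the ring $eR$ (the paper runs the two directions explicitly via $fe=(fe)^2\in eR$), and part (2) rests on the same product decomposition of $A$ along the idempotent $(e,0)$, which the paper obtains by citing \cite[Theorem 4.4]{AW} while you verify the multiplication on $(e,0)A$ and $(1-e,0)A$ directly. Your explicit faithfulness argument, using $eM=0$ and $eR\cap(1-e)R=0$, fills in a step the paper dismisses as obvious.
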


\begin{proof} (1) Note the $R$-submodules and the $eR$-submodules of $eR$ are the same.

($\Rightarrow$) Let $I$ be an ideal of the ring $eR$. Since $eR$ is a strongly CS $R$-module, there exists $f=f^2\in R$
such that $I$ is an essential $R$-submodule of $feR$. Thus $I$ is an essential $eR$-submodule of $(fe)eR$.
Moreover, $fe=(fe)^2\in eR$. Therefore $eR$ is a CS ring.

($\Leftarrow$) Let $I$ be an ideal of $R$ contained in $eR$. Then $I=eI$ is an ideal of $eR$.  Since $eR$ is a CS ring, there exists an idempotent $f\in eR$
such that $I$ is an essential $eR$-submodule of $feR$. It is clear that $I$ is also an essential $R$-submodule of $feR$.

(2) It is clear that $R=eR\oplus (1-e)R$ and $M=eM\oplus (1-e)M$. By \cite[Theorem 4.4]{AW}, $A=R\propto M\cong (eR\propto eM)\times ((1-e)R\propto (1-e)M)$
(as rings). But $eM=0$, so the ring $A$ is isomorphic to  the ring $eR\times ((1-e)R\propto (1-e)M)$.  The first assertion is obvious.
\end{proof}

\begin{theor}\label{CS TX} The following are equivalent for a ring $R$ and an $R$-module $M$:

\noindent{\em (1)} $A=R\propto M$ is a CS ring;

\noindent{\em (2)} $M$ satisfies the following two conditions:

\noindent{\em (a)} $Ann_R(M)$ is a direct summand of $R$ which is a CS ring, and

\noindent{\em (b)} $M$ is a weakly IN $R$-module;

\noindent{\em (3)} $M$ satisfies the following two conditions:

\noindent{\em (a)} $Ann_R(M)$ is a direct summand of $R$ which is a CS ring, and

\noindent{\em (b)} $M$ is a strongly CS $R$-module;

\noindent{\em (4)} $M\oplus Ann_R(M)$ is a weakly IN $R$-module;

\noindent{\em (5)} $M\oplus Ann_R(M)$ is a strongly CS $R$-module.
\end{theor}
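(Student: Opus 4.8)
The plan is to treat the five conditions in three blocks: the two ``internal'' conditions (2) and (3), the reduction of (1) to a product decomposition, and the reinterpretation of (4) and (5) through the direct-sum criteria of Section 3. I would begin with (2) $\Leftrightarrow$ (3). Both conditions share clause (a), which asserts in particular that $Ann_R(M)$ is a direct summand of $R$; under this hypothesis Corollary \ref{Ann(R) direct summand}(2) gives that $M$ is weakly IN if and only if $M$ is strongly CS, so (2) and (3) are equivalent.

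The core is (1) $\Leftrightarrow$ (2). For (1) $\Rightarrow$ (2), clause (b) is immediate from Lemma \ref{T-Ext-M-weakly-IN}. For clause (a), since $A$ is CS I would choose an idempotent $e$ of $R$ with $(0,M)\subseteq^{ess}(e,0)A=(eR,eM)$ (using Proposition \ref{CS rings} and that every idempotent of $A$ has the form $(e,0)$). The inclusion $(0,M)\subseteq(eR,eM)$ forces $M=eM$, hence $1-e\in Ann_R(M)$. For the reverse inclusion, given $a\in Ann_R(M)$ the principal ideal $A(ea,0)$ collapses to $(eaR,0)$ because $ea$ annihilates $M$; this ideal meets $(0,M)$ trivially, so essentiality forces $ea=0$ and thus $a\in(1-e)R$. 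Therefore $Ann_R(M)=(1-e)R$ is a direct summand. Lemma \ref{product of weakly IN rings}(2) then gives $A\cong (1-e)R\times(eR\propto M)$, and Lemma \ref{product-CS-rings}(2) shows both factors are CS; in particular $Ann_R(M)=(1-e)R$ is a CS ring, completing clause (a). For (2) $\Rightarrow$ (1), I would write $Ann_R(M)=gR$ and set $e=1-g$, so that Lemma \ref{product of weakly IN rings}(2) yields $A\cong gR\times(eR\propto M)$ with $M$ faithful over $eR$. Here $gR$ is CS by assumption, while $M$ is weakly IN over $R$ if and only if it is weakly IN over $R/Ann_R(M)\cong eR$ (Proposition \ref{pseudo-Baer modules}), so $eR\propto M$ is CS by Lemma \ref{faithful CS TX}; Lemma \ref{product-CS-rings}(2) then makes $A$ CS.

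Finally I would handle (2) $\Leftrightarrow$ (4) and (3) $\Leftrightarrow$ (5) by writing $B=Ann_R(M)$ and recording one elementary fact: $B+Ann_R(B)=R$ if and only if $B$ is a direct summand of $R$ (writing $1=b+c$ with $b\in B$ and $c\in Ann_R(B)$ forces $b^2=b$ and $B=bR$). Granting this, Theorem \ref{weakly IN direct sum} says $M\oplus B$ is weakly IN exactly when $M$ is weakly IN, $B$ is weakly IN as an $R$-module, and $B+Ann_R(B)=R$; the last clause means $B$ is a direct summand, say $B=gR$, and for such $B$ one checks via Proposition \ref{pseudo-Baer modules} and Remark \ref{submodules of WIN modules}(1) that $gR$ is weakly IN as an $R$-module if and only if $gR$ is a CS ring. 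These are precisely the clauses of (2), giving (2) $\Leftrightarrow$ (4). The argument for (3) $\Leftrightarrow$ (5) is parallel, using Theorem \ref{finite strongly CS direct sum}: for $n=2$ its clause (b), a complete set of orthogonal idempotents splitting $M$ and $B$, is equivalent to $B$ being a direct summand, and $B=gR$ is strongly CS as an $R$-module if and only if $gR$ is a CS ring by Lemma \ref{product of weakly IN rings}(1), which together with $M$ strongly CS reproduces (3).

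I expect the main obstacle to be the forward implication (1) $\Rightarrow$ (2), specifically the proof that $Ann_R(M)$ is a direct summand: one must extract the correct idempotent from the CS structure of $A$ and run the essentiality argument on the ideal $(0,M)$ to pin down $Ann_R(M)=(1-e)R$ exactly. The inclusion $1-e\in Ann_R(M)$ is routine, but the reverse inclusion is where essentiality does the real work. Once this splitting is in hand, everything else reduces via Lemmas \ref{product of weakly IN rings}, \ref{product-CS-rings} and \ref{faithful CS TX} to the already-settled faithful case, and the equivalences with (4) and (5) become bookkeeping with annihilators.
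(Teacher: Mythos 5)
Your proof is correct, and it draws on the same toolkit as the paper (Lemmas \ref{T-Ext-M-weakly-IN}, \ref{faithful CS TX}, \ref{product of weakly IN rings}, \ref{product-CS-rings}, Corollary \ref{Ann(R) direct summand} and the two direct-sum theorems of Section 3), but both the architecture and the pivotal computation differ from the paper's. The paper proves the single cycle (1) $\Rightarrow$ (2) $\Rightarrow$ (3) $\Rightarrow$ (5) $\Rightarrow$ (4) $\Rightarrow$ (1); in its step (1) $\Rightarrow$ (2)(a) it embeds the ideal $(Ann_R(M),0)$ essentially in $(e,0)A$ and shows $eM=0$ by playing the ideal $(0,eM)$ against essentiality, concluding $Ann_R(M)=eR$. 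You instead test with the mirror ideal $(0,M)$: the inclusion $(0,M)\subseteq (eR,eM)$ alone gives $M=eM$, hence $1-e\in Ann_R(M)$, and essentiality is then spent on the principal ideals $(eaR,0)$ for $a\in Ann_R(M)$ to force $ea=0$, yielding $Ann_R(M)=(1-e)R$; this is a valid dual of the paper's argument (your idempotent is the complement of theirs). Architecturally, your hub-and-spoke scheme buys transparency for (4) and (5): you dispose of (2) $\Leftrightarrow$ (4) and (3) $\Leftrightarrow$ (5) as direct applications of Theorems \ref{weakly IN direct sum} and \ref{finite strongly CS direct sum}, once you record that $B+Ann_R(B)=R$ iff $B=bR$ for an idempotent $b$, that clause (b) of Theorem \ref{finite strongly CS direct sum} for $n=2$ amounts to $B$ being a direct summand (from $e_1M=M$ and orthogonality one gets $e_2\in Ann_R(M)=B$ and $B=e_2R$), and that $gR$ is weakly IN (strongly CS) as an $R$-module iff $gR$ is a CS ring — via Proposition \ref{pseudo-Baer modules}((1)$\Leftrightarrow$(2)) with $R/Ann_R(gR)\cong gR$ and Remark \ref{submodules of WIN modules}(1) in the first case, and Lemma \ref{product of weakly IN rings}(1) in the second. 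The paper's cycle buys economy instead: it runs the splitting-plus-faithful-case argument only once, inside (4) $\Rightarrow$ (1) (using the same idempotent extraction from $Ann_R(M)+Ann_R(Ann_R(M))=R$ that you isolate as your elementary fact), whereas you perform it in (2) $\Rightarrow$ (1) and again, implicitly, in the direct-sum bookkeeping. One trivial remark: like the paper (which assumes $Ann_R(M)\neq 0$ at the start of (1) $\Rightarrow$ (2)(a)), you should note that when $Ann_R(M)=0$ all the clauses involving the summand $Ann_R(M)$ hold vacuously, so the degenerate case causes no harm in either organization.
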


\begin{proof} It is easily seen that $\{(e, 0) \in A \mid e^2=e \in R\}$ is the set of idempotents of $A$.

(1) $\Rightarrow$ (2) (a) Suppose that $Ann_R(M)\neq 0$. Clearly, $(Ann_R(M),0)$ is an ideal of $A$.
Since $A$ is a CS ring, there exists an idempotent $e$ of $R$ such that $(Ann_R(M),0)\subseteq ^{ess}(e,0)A=(eR,eM)$. Hence, $Ann_R(M)\subseteq eR$.
We claim that $eM=0$. Suppose, on the contrary, that $eM\neq 0$. Then $(0,eM)$ is a nonzero ideal of $A$ which is contained in $(eR,eM)$.
Thus $(0,eM)\cap (Ann_R(M),0)\neq 0$, a contradiction.
Therefore $eR\subseteq Ann_R(M)$. It follows that $Ann_R(M)=eR$ is a direct summand of $R$.
Moreover, $eR$ is a CS ring by Lemmas \ref{product-CS-rings}(2) and \ref{product of weakly IN rings}(2).

(b) follows from Lemma \ref{T-Ext-M-weakly-IN}.

(2) $\Rightarrow$ (3) Use Corollary \ref{Ann(R) direct summand}.

(3) $\Rightarrow$ (5) By hypothesis, there exists $e^2=e\in R$ such that $Ann_R(M)=eR$. Thus $Ann_R(M)=eAnn_R(M)$.
Moreover, since $R=eR\oplus (1-e)R$, we have $M=(1-e)M$. Also, $M$ and $Ann_R(M)$ are strongly CS $R$-modules by Lemma \ref{product of weakly IN rings}(1).
So, by Theorem \ref{finite strongly CS direct sum}, $M\oplus Ann_R(M)$ is a strongly CS $R$-module.

(5) $\Rightarrow$ (4) This follows from Theorem \ref{WIN}.

(4) $\Rightarrow$ (1) By Theorem \ref{weakly IN direct sum}, $Ann_R(M)+Ann_R(Ann_R(M))=R$.
Then there exist $a\in Ann_R(M)$ and $b\in Ann_R(Ann_R(M))$ such that $a+b=1$. Let $c\in Ann_R(M)\cap Ann_R(Ann_R(M))$.
Thus $c=ca+cb=0$ and hence $Ann_R(M)\cap Ann_R(Ann_R(M))=0$. It follows that $Ann_R(M)$ is a direct summand of $R$.
So $Ann_R(M)=eR$ for some idempotent $e$ of $R$. Note that $eR$ is a weakly IN $R$-module by Proposition \ref{closed-submodules}.
Moreover, the $R$-submodules and the $eR$-submodules of $eR$ are exactly the same. Also, we have $Ann_{eR}(L) = eAnn_R(L)$
for any $R$-submodule $L$ of $eR$. We thus deduce that $eR$ is a CS ring by Proposition \ref{CS rings}.
In addition, using Proposition \ref{closed-submodules} again, we see that $(1-e)M$ is a weakly IN $(1-e)R$-module.
Note that $(1-e)M$ is a faithful $(1-e)R$-module (see Lemma \ref{product of weakly IN rings}(2)).
Then $(1-e)R\propto (1-e)M$ is a CS ring by Lemma \ref{faithful CS TX}.
Since the rings $R\propto M$ and $eR\times ((1-e)R\propto (1-e)M)$ are isomorphic (see Lemma \ref{product of weakly IN rings}(2)), it follows that
$R\propto M$ is a CS ring by Lemma \ref{product-CS-rings}(2).
\end{proof}

\begin{rem} \em Let $R$ be a ring and let $M$ be an $R$-module such that $R\propto M$ is a CS ring.
By Theorem \ref{CS TX}, $M$ is a strongly CS $R$-module. However, the ring $R$ need not be a CS ring, in general, as it is shown in the following example.
Let $R$ be a local ring which not uniform (for example, we can take $R=K\propto (K\oplus K)$, where $K$ is a field)
and let ${\mathfrak m}$ be the maximal ideal of $R$. Consider the $R$-module $M=E(R/{\mathfrak m})$ and the ring $A=R\propto M$.
By \cite[Corollary 2 of Proposition 2.26]{Sharp and Vamos:1971}, $M$ is a faithful $R$-module. Moreover, since $M$ is a uniform $R$-module,
$M$ is strongly CS. Thus $A$ is a CS ring by Lemma \ref{faithful CS TX} (indeed, $A$ is uniform).
Since $R$ is local, $R$ is indecomposable and so $R$ cannot be a CS ring since it is not uniform (see Remark \ref{submodules of WIN modules}(3)).
However, when the $R$-module $M$ is flat, then the condition $R\propto M$ is CS forces $R$ to become CS as it is shown in the following result.
\end{rem}

\begin{cor}\label{flat} Let $R$ be a ring and let $M$ be a flat $R$-module. Then the following are equivalent:

\noindent{\em (1)} $A=R\propto M$ is a CS ring;

\noindent{\em (2)} $R$ and $M$ are weakly IN $R$-modules and $Ann_R(M)$ is a direct summand of $R$;

\noindent{\em (3)} $R$ and $M$ are strongly CS $R$-modules and $Ann_R(M)$ is a direct summand of $R$.
\end{cor}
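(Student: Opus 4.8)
The plan is to bootstrap from Theorem \ref{CS TX}, whose hypotheses already cover almost everything here. The only way in which conditions (2) and (3) above strengthen the corresponding conditions of Theorem \ref{CS TX} is that they require all of $R$ to be weakly IN (respectively strongly CS), that is, that $R$ itself be a CS ring (Remark \ref{submodules of WIN modules}(1)), whereas Theorem \ref{CS TX} only demands that the summand $Ann_R(M)$ be a CS ring. So the entire new content is to show that, under flatness of $M$, the fact that $Ann_R(M)$ is a CS direct summand propagates to the complementary summand. Throughout I would write $Ann_R(M)=eR$ for an idempotent $e$ once we know it is a direct summand, so that $R=eR\times(1-e)R$, $eM=0$ and $M=(1-e)M$, and I would use repeatedly that $R$ is a CS ring if and only if both $eR$ and $(1-e)R$ are CS rings (Lemma \ref{product-CS-rings}(2)).

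With these reductions the two cheap implications of a cycle (1) $\Rightarrow$ (2) $\Rightarrow$ (3) $\Rightarrow$ (1) are quick. For (3) $\Rightarrow$ (1): since $R$ is strongly CS it is a CS ring, so $eR=Ann_R(M)$ is a CS ring by Lemma \ref{product-CS-rings}(2); together with $M$ strongly CS this is exactly Theorem \ref{CS TX}(3), giving (1). For (2) $\Rightarrow$ (3): because $Ann_R(M)$ is a direct summand, Corollary \ref{Ann(R) direct summand}(2) upgrades ``$M$ weakly IN'' to ``$M$ strongly CS'', while Remark \ref{submodules of WIN modules}(1) upgrades ``$R$ weakly IN'' to ``$R$ strongly CS''. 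Thus only (1) $\Rightarrow$ (2) carries genuine weight: by Theorem \ref{CS TX}, (1) already yields that $M$ is weakly IN and that $Ann_R(M)=eR$ is a direct summand which is a CS ring, and it remains solely to prove that $S:=(1-e)R$ is a CS ring.

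To do this I would isolate the following claim, which is where flatness enters: a faithful flat weakly IN module $N$ over any ring $S$ forces $S$ to be a CS ring. Granting it, the proof finishes quickly, since $\overline{M}:=(1-e)M=M$ is faithful over $S$ (Lemma \ref{product of weakly IN rings}(2)), is flat over $S$ as the base change $M\otimes_R S$ of a flat module, and is weakly IN over $S$ by Proposition \ref{pseudo-Baer modules} (note $S\cong R/Ann_R(M)$); hence $S$ is CS, and with $eR$ also CS we conclude via Lemma \ref{product-CS-rings}(2) that $R$ is a CS ring, i.e. $R$ is weakly IN. To prove the claim, take ideals $I,J$ of $S$ with $I\cap J=0$; the flatness identity $(I\cap J)N=IN\cap JN$ gives $IN\cap JN=0$, so applying the weakly IN hypothesis to $IN$ and $JN$ yields $Ann_S(IN)+Ann_S(JN)=S$, while faithfulness gives $Ann_S(IN)=Ann_S(I)$ and $Ann_S(JN)=Ann_S(J)$ (because $sIN=0$ forces $sI\subseteq Ann_S(N)=0$). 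Hence $Ann_S(I)+Ann_S(J)=S$, and $S$ is a CS ring by Proposition \ref{CS rings}.

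The step I expect to be the main obstacle is the flatness identity $(I\cap J)N=IN\cap JN$, the one place where the hypothesis on $M$ is genuinely used and where it must be invoked correctly. It is the standard consequence of tensoring the exact sequence $0\to I\cap J\to I\oplus J\to I+J\to 0$ with the flat module $N$ (identifying $I\otimes_S N$ with $IN$, and so on), which I would either cite or spell out in one line. Everything else amounts to bookkeeping with the idempotent $e$ and repeated appeals to Theorem \ref{CS TX} and Lemma \ref{product-CS-rings}(2).
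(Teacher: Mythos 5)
Your proposal is correct, but the substantive implication is carried out by a genuinely different mechanism than the paper's. The paper (which runs the cycle (1) $\Rightarrow$ (3) $\Rightarrow$ (2) $\Rightarrow$ (1)) also extracts everything except ``$R$ is CS'' from Theorem \ref{CS TX}, but it proves $R$ is CS by staying inside $A$: given ideals $I$, $J$ of $R$ with $I\cap J=0$, flatness over $R$ gives $IM\cap JM=(I\cap J)M=0$, so the lifted ideals $I'=(I,IM)$ and $J'=(J,JM)$ of $A$ satisfy $I'\cap J'=0$; applying Proposition \ref{CS rings}(3) to the CS ring $A$ and computing $Ann_A(I')=(Ann_R(I),Ann_M(I))$ then yields $Ann_R(I)+Ann_R(J)=R$. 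You never return to $A$: you factor $R=eR\times S$ with $S=(1-e)R$, check that $M$ is faithful, flat and weakly IN over $S$ (Lemma \ref{product of weakly IN rings}(2), base change of flatness, Proposition \ref{pseudo-Baer modules}), and prove the standalone lemma that a faithful flat weakly IN module forces its ring to be CS --- essentially the same annihilator computation, but run over $S$ with submodules $IN$, $JN$ of $M$ instead of ideals of $A$. Both routes hinge on the identity $(I\cap J)N=IN\cap JN$ for flat $N$ (the paper cites Tuganbaev's Proposition 8.5; your tensor argument on $0\to I\cap J\to I\oplus J\to I+J\to 0$ is the standard proof of that citation). The paper's version buys brevity and no base-change bookkeeping; yours buys a reusable intrinsic lemma --- it is precisely the flat analogue of the argument the paper later uses for the corollary on a faithful ideal $H$, where $IH\cap JH=0$ holds for free --- and it shows the slightly stronger fact that, for flat $M$, the conditions in Theorem \ref{CS TX}(2) already force $R$ itself to be CS without re-invoking CS-ness of $A$. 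Your remaining implications ((2) $\Rightarrow$ (3) via Corollary \ref{Ann(R) direct summand}(2) and Remark \ref{submodules of WIN modules}(1); (3) $\Rightarrow$ (1) via Lemma \ref{product-CS-rings}(2) and Theorem \ref{CS TX}(3)) match the paper's up to the orientation of the cycle and are correct.
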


\begin{proof} (1) $\Rightarrow$ (3) By Theorem \ref{CS TX}, $M$ is a strongly CS $R$-module and $Ann_R(M)$ is a direct summand of $R$.
To show that $R$ is CS, take two ideals $I$ and $J$ of $R$ such that $I\cap J=0$. Since $M$ is a flat $R$-module, we have $IM\cap JM=(I\cap J)M$ by \cite[Proposition 8.5]{Tuganbaev}. Thus $IM\cap JM=0$. Consider the ideals $I'=(I,IM)$ and $J'=(J,JM)$ of $A$. Since $I'\cap J'=0$ and $A$ is CS, we have   $Ann_A(I')+Ann_A(J')=A$ (see Proposition \ref{CS rings}). It follows that $$(Ann_R(I),Ann_M(I))+(Ann_R(J),Ann_M(J))=A.$$
So $Ann_R(I)+Ann_R(J)=R$. Using again Proposition \ref{CS rings}, we conclude that $R$ is a CS ring. Hence $R$ is strongly CS.

(3) $\Rightarrow$ (2) This follows from Theorem \ref{WIN}.

(2) $\Rightarrow$ (1) Since $R$ is weakly IN, $R$ is a strongly CS ring by Remark \ref{submodules of WIN modules}(1). As $Ann_R(M)$ is a direct summand of $R$,
it follows that $Ann_R(M)$ is a CS ring (Proposition \ref{closed-submodules} and Lemma \ref{product of weakly IN rings}(1)). Now use Theorem \ref{CS TX}.
\end{proof}

\begin{cor} Let $H$ be a faithful ideal of $R$ and let $T(R)$ be the total ring of fractions of $R$. Then the following are equivalent:

\noindent{\em (1)} $R\propto H$ is a CS ring;

\noindent{\em (2)} $R\propto T(R)$ is a CS ring;

\noindent{\em (3)} $R$ is a CS ring.
\end{cor}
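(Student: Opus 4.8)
The plan is to treat both nonzero modules $H$ and $T(R)$ as \emph{faithful} $R$-modules and to reduce each of (1), (2) to a statement about weakly IN modules via Lemma \ref{faithful CS TX}. Indeed $H$ is faithful by hypothesis, while $T(R)=S^{-1}R$ (with $S$ the set of non-zerodivisors of $R$) is faithful because the localization map $R\to T(R)$ is injective, so $1\in R\subseteq T(R)$ and any $r\in R$ killing $T(R)$ kills $1$ and is therefore $0$. Consequently, by Lemma \ref{faithful CS TX}, assertion (1) is equivalent to ``$H$ is a weakly IN $R$-module'' and assertion (2) is equivalent to ``$T(R)$ is a weakly IN $R$-module''. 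Since $R$ is CS exactly when $R$ is weakly IN (Remark \ref{submodules of WIN modules}(1)), the whole corollary reduces to proving that each of $H$ and $T(R)$ is weakly IN if and only if the $R$-module $R$ is weakly IN. In both cases the easy direction is immediate from Proposition \ref{closed-submodules}: $H$ is a submodule of $R$, so if $R$ is weakly IN then so is $H$; and $R$ is a submodule of $T(R)$, so if $T(R)$ is weakly IN then so is $R$.

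For the remaining implication in the ideal case, I would start from two ideals $I,J$ of $R$ with $I\cap J=0$ and pass to the products $HI$ and $HJ$. These lie inside $H$ (as $H$ is an ideal) and satisfy $HI\cap HJ\subseteq I\cap J=0$, so if $H$ is weakly IN then $Ann_R(HI)+Ann_R(HJ)=R$. The key computation is the annihilator identity $Ann_R(HI)=Ann_R(I)$: the inclusion $\supseteq$ is trivial, and for $\subseteq$ one observes that $rHI=0$ gives $(rI)H=0$, whence $rI\subseteq Ann_R(H)=0$ by faithfulness, so $rI=0$. The same applies to $J$, yielding $Ann_R(I)+Ann_R(J)=R$ and hence that $R$ is weakly IN (i.e. CS).

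For the total ring of fractions I would instead use that $R$ is an essential $R$-submodule of $T(R)$: for $0\neq x=a/s\in T(R)$ one has $sx=a\in R$ with $a\neq 0$, so $Rx\cap R\neq 0$. Given $R$-submodules $N,L$ of $T(R)$ with $N\cap L=0$, the ideals $N\cap R$ and $L\cap R$ intersect trivially, so if $R$ is weakly IN then $Ann_R(N\cap R)+Ann_R(L\cap R)=R$. Here the decisive point, the analogue of the identity above, is $Ann_R(N)=Ann_R(N\cap R)$: if $r(N\cap R)=0$ and $x=a/s\in N$, then $sx=a\in N\cap R$ forces $rsx=0$ in $T(R)$, and since every element of $S$ is a \emph{unit} in $T(R)$ we may cancel $s$ to get $rx=0$; thus $rN=0$. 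This gives $Ann_R(N)+Ann_R(L)=R$, so $T(R)$ is weakly IN whenever $R$ is.

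I expect the main obstacle to be establishing the two annihilator-collapse identities $Ann_R(HI)=Ann_R(I)$ and $Ann_R(N)=Ann_R(N\cap R)$, since everything else is a bookkeeping application of Lemma \ref{faithful CS TX}, Proposition \ref{closed-submodules} and Remark \ref{submodules of WIN modules}(1); the faithfulness of $H$ and the invertibility of non-zerodivisors in $T(R)$ are precisely what make these identities work, letting submodules of $H$ and $T(R)$ detect the weakly IN property of $R$ itself. (Alternatively, since $T(R)$ is a flat $R$-module, the equivalence (2)$\Leftrightarrow$(3) could be deduced directly from Corollary \ref{flat}, using $Ann_R(T(R))=0$.)
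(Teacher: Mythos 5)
Your proposal is correct, and roughly half of it coincides with the paper's own proof: the implication (1)$\Rightarrow$(3) via the submodules $HI$, $HJ$ of $H$ and the annihilator collapse $Ann_R(HI)=Ann_R(I)$ forced by faithfulness is exactly the paper's computation, and your (3)$\Rightarrow$(1) (weakly IN passes to the submodule $H\leq R$ by Proposition \ref{closed-submodules}, then apply Lemma \ref{faithful CS TX}) matches the paper's (2)$\Rightarrow$(1) step. Where you genuinely diverge is (3)$\Rightarrow$(2). The paper proves that $T(R)$ is a \emph{strongly CS} $R$-module by localizing essential extensions ($I\subseteq^{ess}eR$ yields $S^{-1}I\subseteq^{ess}eT(R)$) and then invokes Corollary \ref{flat}, so its route runs through the flatness of $T(R)$; moreover, its write-up only treats ideals $I'=S^{-1}I$ of $T(R)$, leaving implicit the reduction of an arbitrary $R$-submodule $N$ to that case (via $N\subseteq^{ess}S^{-1}(N\cap R)$). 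You instead prove directly that $T(R)$ is \emph{weakly IN}, contracting arbitrary $R$-submodules $N$, $L$ to the ideals $N\cap R$, $L\cap R$ and using the identity $Ann_R(N)=Ann_R(N\cap R)$, which is valid precisely because every $s\in S$ becomes a unit of $T(R)$; you then finish with Lemma \ref{faithful CS TX} alone. Your version is more self-contained, needing neither flatness nor Corollary \ref{flat}, and it handles general $R$-submodules head-on; the paper's route yields the slightly stronger structural fact that $T(R)$ is strongly CS over $R$ and fits its flat-module framework. One small caveat: your closing parenthetical overstates what Corollary \ref{flat} delivers on its own. It reduces (2) to the statement that $R$ and $T(R)$ are both weakly IN, which settles (2)$\Rightarrow$(3) instantly, but (3)$\Rightarrow$(2) still requires showing that $T(R)$ is weakly IN (or strongly CS) -- exactly your contraction argument or the paper's localization argument -- so the parenthetical is a shortcut for one direction only, not an independent proof of the equivalence; your main argument, however, already covers this.
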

\begin{proof} Note that $T(R)$ is a flat $R$-module (see \cite[Corollary 3.6]{AM}).

(1) $\Rightarrow$ (3) Let $I$ and $J$ be two ideals of $R$ such that $I\cap J=0$. So $IH\cap JH=0$. But $IH$ and $JH$ are two $R$-submodules
of the faithful $R$-module $H$. Moreover, $H$ is weakly IN $R$-module by Lemma \ref{faithful CS TX}. Therefore $Ann_R(IH)+Ann_R(JH)=R$.
But $Ann_R(IH)=Ann_R(I)$ and $Ann_R(JH)=Ann_R(J)$ as $H$ is faithful. Then $Ann_R(I)+Ann_R(J)=R$. By Proposition \ref{CS rings},
we see that $R$ is a CS ring.

(3) $\Rightarrow$ (2) It is clear that $Ann_R(T(R))=0$. To prove that $R\propto T(R)$ is a CS ring, we only need to show that
$T(R)$ is a strongly CS $R$-module (see Corollary \ref{flat}).
Let $S$ denote the set of all non-zero-divisors in $R$. Then $T(R)=S^{-1}R$. Let $I'$ be an ideal of $T(R)$.
Then $I'=S^{-1}I$ for some ideal $I$ of $R$. But $R$ is a CS ring, so there exists $e^2=e\in R$ such that $I\subseteq ^{ess} eR$.
It is easily seen that $I'=S^{-1}I\subseteq ^{ess} eT(R)$.
Consequently, $T(R)$ is a strongly CS $R$-module.

(2) $\Rightarrow$ (1) By Corollary \ref{flat}, $R$ is a weakly IN $R$-module.
Since $H$ is an $R$-submodule of $R$, $H$ is also a weakly IN $R$-module. According to Lemma \ref{faithful CS TX}, it follows that
$R\propto H$ is a CS ring.
\end{proof}

\end{document}